\documentclass[12pt]{article}
\usepackage{color}

\usepackage{amsmath}
\usepackage{amsfonts}
\usepackage{amsthm}
\usepackage{amssymb}
\usepackage{bm}
\usepackage[applemac]{inputenc}
\usepackage{enumerate}
\usepackage{tipx}
\bibliographystyle{alpha}
\makeatletter

\setlength{\textwidth}{400pt}

\newtheoremstyle{mystyle}{}{}{\slshape}{2pt}{\scshape}{.}{ }{} 

\newtheorem{thm}{Theorem}[section]
\newtheorem{cor}[thm]{Corollary}

\newtheorem{prop}[thm]{Proposition}
\newtheorem{lemme}[thm]{Lemma}
\newtheorem{fait}[thm]{Fact}

\newtheorem{question}[thm]{Question}

\theoremstyle{definition}
\newtheorem{defi}[thm]{Definition}
\theoremstyle{mystyle}

\theoremstyle{remark}

\newcommand{\impl}{\rightarrow}
\newcommand{\pred}{\mathbf P}

\DeclareMathOperator{\tp}{tp}
\DeclareMathOperator{\acl}{acl}
\DeclareMathOperator{\st}{st}

\title{Externally definable sets and dependent pairs}
\author{Artem Chernikov \thanks{Supported by the Marie Curie Initial Training Network in Mathematical Logic  - MALOA - From MAthematical LOgic to Applications, PITN-GA-2009-238381} \and Pierre Simon}
\begin{document}
\maketitle
\begin{abstract}
We prove that externally definable sets in first order $NIP$ theories have \emph{honest definitions}, giving a new proof of Shelah's expansion theorem. Also we discuss a weak notion of stable embeddedness true in this context. Those results are then used to prove a general theorem on dependent pairs, which in particular answers a question of Baldwin and Benedikt on naming an indiscernible sequence.
\end{abstract}
\section*{Introduction}

This paper is organised in two main parts, the first studies externally definable sets in first order $NIP$ theories and the second, using those results, proves dependence of some theories with a predicate, under quite general hypothesis. We believe both parts to be of independent interest.
A third section gives some examples of dependent pairs and relates results proved here to ones existing in the literature.

\subsubsection*{Honest definitions}

Let $M$ be a model of a theory $T$. An \emph{externally definable} subset of $M^k$ is an $X\subseteq M^k$ that is equal to $\phi(M^k,d)$ for some formula $\phi$ and $d$ in some $N\succ M$. In a stable theory, by definability of types, any externally definable set coincides with some $M$-definable set. By contrast, in a random graph for example, any subset in dimension 1 is externally definable.

Assume now that $T$ is $NIP$. A theorem of Shelah (\cite{Sh863}), generalising a result of Poizat and Baisalov in the o-minimal case (\cite{Poizat}), states that the projection of an externally definable set is again externally definable. His proof does not give any information on the formula defining the projection. A slightly clarified account is given by Pillay in \cite{Pillayexp}.

In section 1, we show how this result follows from a stronger one: existence of honest definitions. An \emph{honest definition} of an externally definable set is a formula $\phi(x,d)$ whose trace on $M$ is $X$ and which implies all $M$-definable subsets containing $X$. Then the projection of $X$ can be obtained simply by taking the trace of the projection of $\phi(x,d)$.

Combining this notion with an idea from \cite{Guingona}, we can adapt honest definitions to make sense over any subset $A$ instead of a model $M$. We obtain a property of \emph{weak stable-embeddedness} of sets in $NIP$ structures. Namely, consider a pair $(M,A)$, where we have added a unary predicate $\pred(x)$ for the set $A$. Take $c\in M$ and $\phi(x,c)$ a formula. We consider $\phi(A,c)$. If $A$ is stably  embedded, then this set is $A$-definable. Guingona shows that in an $NIP$ theory, this set is externally $A$-definable, {\it i.e.}, coincides with $\psi(A,d)$ for some $\psi(x,y)\in L$ and $d\in A'$ where $(M',A') \succ (M,A)$. We strengthen this by showing that one can find such a $\phi(x,d)$ with the additional property that $\psi(x,d)$ never lies, namely $(M',A') \models \psi(x,d) \rightarrow \phi(x,c)$. In particular, the projection of $\psi(x,d)$ has the same trace on $A$ as the projection of $\phi(x,c)$. This is the main tool used in Section 2 to prove dependence of pairs.

\subsubsection*{Dependent pairs}

In the second part of the paper we try to understand when dependence of a theory is preserved after naming a new subset by a predicate. We provide a quite general sufficient condition for the dependence of the pair, in terms of the structure induced on the predicate and the restriction of quantification to the named set.

This question was studied for stable theories by a number of people (see  \cite{CaZi} and \cite{BoBa} for the most general results). In the last few years there has been a large number of papers proving dependence for some pair-like structures, e.g. \cite{BDO}, \cite{GH1}, \cite{Box}, etc. We apologise for adding yet another result to the list. However, our approach differs in an important way from the previous ones, in that we work in a general $NIP$ context and do not make any assumption of minimality of the structure (by asking for example that the algebraic closure controls relations between points). In particular, in the case of pairs of models, we obtain that if $M$ is dependent, $N \succ M$ and $(N,M)$ is bounded (see Section 2 for a definition), then $(N,M)$ is dependent.

Those results seem to apply to most, if not all, of the pairs known to be dependent. It also covers some new cases, in particular answering a question of Baldwin and Benedikt about naming an indiscernible sequence.

\subsubsection*{The setting}

We will not make a blanket assumption that $T$ is $NIP$, so we work a priori with a general first order theory $T$ in a language $L$. We use standard notation. We have a monster model $\mathbb M$. If $A$ is a set of parameters, $L(A)$ denotes the formulas of $L$ with parameters from $A$. If $\phi(x)$ is some formula, and $A$ a subset of $\mathbb M$, we will write $\phi(A)$ for the set of tuples $a\in A^{|x|}$ such that $\phi(a)$ holds. If $A$ is a set of parameters, by $\phi(x) \rightarrow^{A} \psi(x)$, we mean that for every $a\in A$, $\phi(a) \rightarrow \psi(a)$ holds. Also $\phi(x) \rightarrow^{p(x)} \psi(x)$ stands for $\phi(x) \rightarrow^{p(\mathbb M)} \psi(x)$.

We will often consider pairs of structures. So if our base language is $L$, we define the language $L_{\mathbf{P}}$ where we add to $L$ a new unary predicate $\mathbf P(x)$.
If $M$ is an $L$-structure and $A\subseteq M$, by the pair $(M,A)$ we mean the $L_{\mathbf P}$ extension of $M$ obtained by setting $\pred(a) \Leftrightarrow a\in A$. Throughout the paper $\pred(x)$ will always denote this extra predicate.
\\

As usual $alt(\phi)$ is the maximal number $n$ such that there exists an indiscernible sequence $( a_i)_{i<n}$ and $c$ satisfying $\phi(a_i,c) \Leftrightarrow i$ is even. Standardly $\phi(x,y)$ is dependent if and only if $alt(\phi)$ is finite. For more on the basics of dependent theories see e.g. \cite{Adl}.

\subsubsection*{Acknowledgments}

We are grateful to Itay Kaplan and the referee for suggesting a number of improvements to the paper, and to Ita\"i Ben Yaacov and Manuel Bleichner for pointing out some typos and deficiencies.

\section{Externally definable sets and honest definitions}

Recall that a partial type $p(x)$ is said to be \emph{stably embedded} if any definable subset of $p(x)$ is definable with parameters from $p(\mathbb M)$. It is well known that if $p(x)$ is stable, then $p(x)$ is stably embedded (see e.g. \cite{OnPe}). We are concerned with an analogous property replacing stable by dependent.

We say that a formula $\phi(x,c)$ is $NIP$ over a (partial) type $p(x)$ if there is no indiscernible sequence $(a_i)_{i<\omega}$ of realisations of $p$ such that $\phi(a_i,c)$ holds if and only if $i$ is even. We say that $\phi(x,y)$ is $NIP$ over $p(x)$ if $\phi(x,c)$ is $NIP$ over $p(x)$ for every $c$.

The following is the fundamental observation. We assume here that we have two languages $L\subseteq L'$, and we work inside a monster model $\mathbb M$ that is an $L'$-structure. The language $L'$ could be $L_\pred$ for example.

\begin{prop}
\label{HonestLemma} Let $p(x)$ be a partial
$L'$-type and $\phi(x,c)\in L(\mathbb M)$ be $NIP$ over $p(x)$. Then for
each small $A\subseteq p(\mathbb{M})$ there is $\theta(x)\in L(p(\mathbb{M}))$
such that 

1) $\theta(x)\cap A=\phi(x,c)\cap A$

2) $\theta(x)\rightarrow^{p(x)}\phi(x,c)$

3) $\phi(x,c)\setminus\theta(x)$ does not contain any $A$-invariant
global $L$-type consistent with $p(x)$.\end{prop}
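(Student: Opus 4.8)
The plan is to prove (2) and (3) first and then read off (1), since the latter is a formal consequence of the former two. For the nontrivial inclusion $A\cap\phi(x,c)\subseteq\theta(x)$, observe that if $a\in A\cap\phi(x,c)$ then the global $L$-type $\tp_L(a/\mathbb{M})$ is realised (by $a$) and $A$-invariant, since any automorphism fixing $A$ pointwise fixes $a$; it is consistent with $p$ because $a\models p$, and it contains $\phi(x,c)$. By (3) it cannot lie in $\phi(x,c)\setminus\theta(x)$, so $\theta(a)$ holds. The opposite inclusion $A\cap\theta\subseteq A\cap\phi$ is immediate from (2) together with $A\subseteq p(\mathbb{M})$. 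Hence it suffices to produce $\theta(x)\in L(p(\mathbb{M}))$ with $\theta(x)\rightarrow^{p(x)}\phi(x,c)$ and with no $A$-invariant global $L$-type consistent with $p$ in the gap.

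To build such a $\theta$ I would work with the family $\mathcal{D}$ of all $\psi(x)\in L(p(\mathbb{M}))$ satisfying $\psi(x)\rightarrow^{p(x)}\phi(x,c)$, which is closed under finite disjunction; every member already satisfies (2), so the task is to find one that also meets (3). Put $X^{-}:=(p(x)\wedge\neg\phi(x,c))(\mathbb{M})$. The key translation is that a complete $L$-type $r(x)$ over $p(\mathbb{M})$ contains $\neg\psi$ for every $\psi\in\mathcal{D}$ if and only if $r$ is finitely satisfiable in $X^{-}$; this is just unwinding that $\psi\rightarrow^{p(x)}\phi$ means exactly $\psi(\mathbb{M})\cap X^{-}=\emptyset$. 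I would then argue by contradiction: if (3) failed for every $\theta\in\mathcal{D}$, then for each such $\theta$ there is an $A$-invariant type $q_{\theta}$, consistent with $p$ and containing both $\phi(x,c)$ and $\neg\theta$; realising $q_{\theta_1\vee\cdots\vee\theta_k}$ witnesses the consistency of each finite subset of $p(x)\cup\{\phi(x,c)\}\cup\{\neg\psi(x):\psi\in\mathcal{D}\}$. By compactness this partial type has a realisation $b^{*}$, and then $r^{*}:=\tp_L(b^{*}/p(\mathbb{M}))$ is finitely satisfiable in $X^{-}$ while $\tp_L(b^{*}/\mathbb{M})$ is a global extension of $r^{*}$ containing $\phi(x,c)$.

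Thus the whole problem reduces to showing that NIP over $p$ forbids such a \emph{splitting} type: a complete $L$-type $r^{*}$ over $p(\mathbb{M})$ that is finitely satisfiable in $X^{-}$ (so every coheir branch contains $\neg\phi(x,c)$, since every point of $X^{-}$ falsifies $\phi(\cdot,c)$) yet has an extension containing $\phi(x,c)$. The feature that makes this configuration possible, and that one must exploit, is that $c\notin p(\mathbb{M})$, so $\phi(x,c)$ is invisible to $r^{*}$ and can be switched on and off while the $L(p(\mathbb{M}))$-type is held fixed. From such an $r^{*}$ I would manufacture, for every $n$, an indiscernible sequence $(a_i)_{i<n}$ of realisations of $p$ with $\phi(a_i,c)$ holding exactly for even $i$, drawing the $\neg\phi$-points from $X^{-}$ via finite satisfiability and the $\phi$-points from the branch through $b^{*}$, arranged so as to keep a common type over the base of $p$ while flipping the value of $\phi(\cdot,c)$. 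Arbitrarily long such sequences contradict the finiteness of the alternation number, i.e. NIP over $p$, so some $\theta\in\mathcal{D}$ satisfies (3).

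I expect the last step to be the main obstacle, for precisely the tension just described: controlling the value of $\phi(\cdot,c)$ forces one to work over parameter sets that see $c$, whereas the indiscernible sequence witnessing IP must keep $c$ outside its base so that alternation is permitted, and a naive extraction by Ramsey to gain indiscernibility would collapse the prescribed alternation pattern. Carrying out the construction therefore requires a careful interleaving of a Morley sequence of a coheir (for the $\neg\phi$ side) with realisations of the $\phi$-branch, together with an extraction that preserves the alternation; this is the genuinely NIP-theoretic heart of the argument, whereas the reduction in the first paragraph and the compactness in the second are routine.
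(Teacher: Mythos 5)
Your two reductions are sound and match what the paper does implicitly: condition (1) follows from (2) and (3) because the realised type $\tp_L(a/\mathbb{M})$ of any $a\in A\cap\phi(\mathbb{M},c)$ is $A$-invariant, consistent with $p$ and contains $\phi(x,c)$ (the paper notes exactly this in the parenthetical ``so in particular all realised types of elements of $A$\dots''); and your compactness argument over the disjunction-closed family $\mathcal{D}$ correctly converts ``no $\theta\in\mathcal{D}$ satisfies (3)'' into a single point $b^{*}\models p(x)\cup\{\phi(x,c)\}$ whose type $r^{*}$ over $p(\mathbb{M})$ is finitely satisfiable in $X^{-}$. (The paper organises this differently: it extracts one formula $\psi_q\rightarrow^{p(x)}\phi(x,c)$ from \emph{each} $A$-invariant global type $q$ consistent with $p\cup\{\phi(x,c)\}$ and takes a finite subcover of that compact type space; your formula-level compactness is an acceptable substitute.) But the proof stops exactly where the theorem's content begins: the alternating indiscernible sequence is announced, not constructed, and you flag its construction as an unresolved obstacle. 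That is a genuine gap.

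The missing idea is that \emph{no Ramsey extraction is needed at all}, so the collapse of the alternation pattern you worry about never arises. Let $q$ be a global $L$-type extending $r^{*}$ and finitely satisfiable in $X^{-}$ (hence invariant over $X^{-}$, and over $B:=A\cup X^{-}$). Build $d_{0},d_{1},\dots$ by alternately realising $q|_{Bd_{<i}}\cup p(x)\cup\{\phi(x,c)\}$ for $i$ even and $q|_{Bd_{<i}}\cup p(x)\cup\{\neg\phi(x,c)\}$ for $i$ odd. The odd steps are consistent because $q|_{Bd_{<i}}$ is finitely satisfiable in $X^{-}$ and every point of $X^{-}$ satisfies $p\wedge\neg\phi(x,c)$; the even steps are consistent because all the $d_{j}$ lie in $p(\mathbb{M})$, so $q|_{Bd_{<i}}\subseteq r^{*}$, and $r^{*}\cup p(x)\cup\{\phi(x,c)\}$ is realised by $b^{*}$. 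The resulting sequence is a Morley sequence of the invariant type $q$ over $B$ and is therefore \emph{automatically} $L$-indiscernible: the side conditions $\pm\phi(x,c)$ do not disturb indiscernibility because each $d_{i}$ still realises the full restriction of $q$ to the base together with the preceding elements. An infinite indiscernible sequence of realisations of $p$ on which $\phi(x,c)$ alternates contradicts NIP over $p$, and the proof closes. This Morley-sequence-with-side-conditions device is precisely the engine of the paper's proof (there run with an arbitrary $A$-invariant $q$, the construction halting at a finite stage $i_{0}$ with $q_{i_{0}}\rightarrow^{p(x)}\phi(x,c)$); without it, your write-up establishes only the routine reductions.
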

\begin{proof}
Let $q(x)\in S_{L}(\mathbb{M})$ be $A$-invariant and consistent
with $\{\phi(x,c)\}\cup p(x)$. We try to choose inductively $a_{i},b_{i}\in p(\mathbb{M})$
and $q_{i}\subseteq q$, for $i<\omega$ such that 

- $q_{i}(x)=q(x)|_{Aa_{<i}b_{<i}}$

- $a_{i}\models q_{i}(x)\cup\{\phi(x,c)\}  \cup p(x) $ (we can always find one by
assumption)

- $b_{i}\models q_{i}(x)\cup\{\neg\phi(x,c)\}  \cup p(x)$.

Assume we succeed. Consider the sequence $(d_i)_{i<\omega}$ where $d_i = a_i$ if $i$ is even and $d_i=b_i$ otherwise. It is a Morley sequence of $q$ over $A$, and as such is $L$-indiscernible. Furthermore, we have $\models \phi(d_i,c)$ if and only if $i$ is even. This contradicts $\phi(x,y)$ being $NIP$ over $p(x)$, so the construction must stop at some finite stage $i_0$. Then $q_{i_0}(x)\rightarrow^{p(x)}\phi(x,c)$
and by compactness there is $\psi_{q}(x)\in q_{i_0}$ (so $\psi_{q}\in L(p(\mathbb{M})$))
such that $\psi_{q}(x)\rightarrow^{p(x)}\phi(x,c)$. So we
see that the set of all such $\psi_{q}$'s covers the compact space
of global $L$-types invariant over $A$ and consistent with $\{\phi(x,c)\}\cup p(x)$
(so in particular all realised types of elements of $A$ such that $\phi(a,c)$). Let $(\psi_{j})_{j<n}$ be a
finite subcovering, then taking $\theta(x)=\bigvee_{j<n}\psi_{j}(x)$
does the job.\end{proof}

\begin{defi}[Externally definable set]
Let $M$ be a model, an externally definable set of $M$ is a subset $X$ of $M^k$ for some $k$ such that there is a formula $\phi(x,y)$ and $d\in \mathbb M$ with $\phi(M,d) = X$. Such a $\phi(x,d)$ is called a definition of $X$.
\end{defi}

We can now prove a form of \emph{weak stable embeddedness} for $NIP$ formulas.

\begin{cor}[Weak stable-embeddedness]\label{weakstable}
Let $\phi(x,y)$ be $NIP$. Given $(M,A)$ and $c\in M$ there are $(M',A')\succeq(M,A)$ and $\theta(x)\in L(A')$ such that $\phi(A,c)=\theta(A)$
and $\theta(x)\rightarrow^{A'}\phi(x,c)$.
\end{cor}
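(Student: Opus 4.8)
The plan is to deduce this directly from Proposition \ref{HonestLemma}, applied with $L' = L_{\pred}$ and the partial $L'$-type $p(x) := \{\pred(x)\}$. Concretely, I would work inside a sufficiently saturated monster model $(\mathbb M, \pred) \succeq (M,A)$ of the $L_{\pred}$-theory of the pair, and write $A' := \pred(\mathbb M)$ for the interpretation of the predicate, so that $p(\mathbb M) = A'$ and $A$ is a small subset of $p(\mathbb M)$. The parameter $c$ lies in $M \subseteq \mathbb M$, so $\phi(x,c)$ is a formula in $L(\mathbb M)$, exactly as required by the hypothesis of the proposition.

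The only point needing an argument before invoking the proposition is that $\phi(x,c)$ is $NIP$ over $p(x)$, and this is where the assumption that $\phi(x,y)$ is $NIP$ (as an $L$-formula) is used. Indeed, a witness to the failure of being $NIP$ over $p$ would be an $L$-indiscernible sequence $(a_i)_{i<\omega}$ of realisations of $p$ with $\phi(a_i,c)$ holding exactly when $i$ is even; but any such sequence is in particular an $L$-indiscernible sequence, so it would force $alt(\phi)=\infty$, contradicting the dependence of $\phi$. Hence $\phi(x,c)$ is $NIP$ over $p(x)$. I expect this verification to be the only step requiring any thought, and even it is immediate; the rest is bookkeeping about what the conclusions of Proposition \ref{HonestLemma} say once $p(x)$ is instantiated as $\pred(x)$.

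Applying the proposition to the small set $A \subseteq p(\mathbb M)$ then yields $\theta(x) \in L(p(\mathbb M)) = L(A')$ satisfying clauses (1) and (2) (clause (3) is not needed here). Clause (1), namely $\theta(x)\cap A = \phi(x,c)\cap A$, is precisely $\theta(A) = \phi(A,c)$; and clause (2), $\theta(x)\rightarrow^{p(x)}\phi(x,c)$, unwinds — since $p(\mathbb M)=A'$ — to $\theta(x)\rightarrow^{A'}\phi(x,c)$. Finally, $\theta$ has only small parameters, all lying in $A'$, so I would take $(M',A')\succeq(M,A)$ to be a small elementary substructure of $(\mathbb M,\pred)$ containing $M$ together with those parameters (or simply keep $(\mathbb M,\pred)$ itself). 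This gives $\theta(x)\in L(A')$ with $\phi(A,c)=\theta(A)$ and $\theta(x)\rightarrow^{A'}\phi(x,c)$, as desired.
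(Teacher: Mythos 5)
Your proposal is correct and is essentially the paper's own proof: the paper likewise notes that $\phi(x,y)$ remains $NIP$ in the expansion $(M,A)$ and then applies Proposition \ref{HonestLemma} with $L'=L_{\pred}$ and $p(x)=\{\pred(x)\}$. Your version merely spells out the bookkeeping (identifying $p(\mathbb M)$ with $A'$ and extracting $(M',A')$) that the paper leaves implicit.
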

\begin{proof}
Notice that $\phi(x,y)$ is still $NIP$ in any expansion of the structure. In particular in the $L_\pred$-structure $(M,A)$. Now
apply Proposition \ref{HonestLemma} with $L'=L_\pred$ and $p(x)=\{\pred(x)\}$.
\end{proof}

\begin{question} Do we get uniform weak stable embeddedness ? In other words, is it possible to choose $\theta$ depending just on $\phi$, or at least just on $\phi$ and $Th(M,A)$ ?
\end{question}

\begin{cor}
Let $f: M \to M$ be an externally definable function, that is the trace on $M$ of an externally definable relation which happens to be a function on $M$. Then there is an $\mathbb{M}$-definable partial function $g: \mathbb{M} \to \mathbb{M}$ with $g|_{M}=f$.	
\end{cor}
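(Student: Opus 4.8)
The plan is to present the graph of $f$ by an honest definition and then read off $g$ as the induced single-valued selection. First I would fix a definition $\phi(x,y,d)$ of the graph of $f$, with $d\in\mathbb M$, so that $\phi(M^2,d)=\{(a,f(a)):a\in M\}$; in particular, for each $a\in M$ the element $f(a)$ is the \emph{unique} $b\in M$ with $\models\phi(a,b,d)$. Since $T$ is $NIP$ the formula $\phi(x,y;z)$ is $NIP$, hence $NIP$ over the full type, so Proposition \ref{HonestLemma} (with $L'=L$, $p(x,y)$ the full type and $A=M^2$) produces $\theta(x,y)\in L(\mathbb M)$ satisfying $\theta(M^2)=\phi(M^2,d)$ and $\theta(x,y)\rightarrow\phi(x,y,d)$ everywhere on $\mathbb M$.

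Given such a $\theta$, I would simply set
\[
g(a)=b\ :\Longleftrightarrow\ \theta(a,b)\wedge\forall y\,\bigl(\theta(a,y)\rightarrow y=b\bigr).
\]
This is patently an $\mathbb M$-definable partial function, so the whole statement reduces to verifying that $g|_{M}=f$, that is, that for every $a\in M$ the fibre $\{b\in\mathbb M:\ \models\theta(a,b)\}$ equals $\{f(a)\}$. One inclusion is free: $(a,f(a))\in\theta(M^2)$ gives $f(a)$ in the fibre. Everything hinges on the reverse inclusion, the single-valuedness over $M$.

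This single-valuedness is the main obstacle, and it genuinely has to be argued. The point is that although $\phi(a,M,d)=\{f(a)\}$, the set $\{b\in\mathbb M:\ \models\phi(a,b,d)\}$ may contain many points of $\mathbb M\setminus M$, and a careless honest definition will drag some of them into the fibre of $\theta$; indeed the version of $\theta$ coming from clause (3) of Proposition \ref{HonestLemma}, which captures \emph{all} $M$-invariant types, is typically genuinely multivalued over $M$ (e.g. when $\phi(a,\cdot,d)$ meets an $M$-invariant type other than $\tp(f(a)/\mathbb M)$). The remedy I would use is to re-run the covering argument of Proposition \ref{HonestLemma} only over the compact space of global types finitely satisfiable in $M$ (the coheirs). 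This still delivers clauses (1) and (2), since the realised types of points of $M^2$ are coheirs, but it ties the fibres of $\theta$ to the finite-satisfiability structure.

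The key lemma to isolate is then: if $a\in M$ and $q(y)$ is a global type finitely satisfiable in $M$ with $\phi(a,y,d)\in q$, then $q=\tp(f(a)/\mathbb M)$. Indeed $q$ cannot contain $y\neq f(a)$, for otherwise $\phi(a,y,d)\wedge y\neq f(a)$ would be realised by a point of $M$, contradicting $\phi(a,M,d)=\{f(a)\}$; and a complete global type omitting $y\neq f(a)$ is the realised type of $f(a)$. Feeding this back, every $b$ in the fibre of $\theta$ over $a$ is forced to equal $f(a)$, the fibres over $M$ collapse to singletons, and $g|_{M}=f$ follows. The delicate point I would have to watch is uniformity: I must ensure that the single formula $\theta(x,y)$, assembled as a finite disjunction over coheirs, does not acquire a spurious pair $(a,b)$ with $a\in M$ and $b\neq f(a)$ from a disjunct attached to a coheir based off $M$. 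I would handle this by choosing each disjunct inside a fixed $\mathbb M$-definable relation that the relevant coheirs already satisfy, so that $\theta$ is contained in a partial function outright; this transfer from the coheir lemma to the ambient definable formula is where the argument needs the most care.
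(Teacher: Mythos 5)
Your first half is fine and is close in spirit to the paper: take a definition $\phi(x,y;c)$ of the graph and replace it by an honest definition $\theta$. But the way you instantiate Proposition \ref{HonestLemma} --- with $L'=L$, $p$ the trivial type and $A=M^2$ --- produces a $\theta$ with parameters anywhere in $\mathbb M$ and no control on its fibres over points of $M$, and you correctly observe that everything then hinges on single-valuedness of $\theta$ over $M$. That is exactly where the argument breaks. Your ``key lemma'' about coheirs is true but does not transfer to $\theta$: each disjunct $\psi_q$ is merely \emph{some} formula belonging to the coheir $q$, and its solution set contains plenty of pairs $(a,b)$ with $a\in M$ that realise no coheir at all, so the lemma says nothing about them. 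The proposed repair --- choosing each disjunct inside a fixed $\mathbb M$-definable partial function that the relevant coheirs satisfy --- is circular: an $\mathbb M$-definable partial function containing the graph of $f$ is precisely the object the corollary asks you to produce, and there is no reason a coheir consistent with $\phi$ should contain a definable partial function. A concrete illustration: in RCF with $M$ the real closure of $\mathbb Q$ and $c>0$ infinitesimal, $\phi(x,y;c)=(|y-x|<c)$ traces the identity on $M^2$, and $\theta(x,y)=(|y-x|<c')$ for a smaller infinitesimal $c'$ satisfies both your clauses while having infinite fibres over every point of $M$, so your $g$ would be the empty function.

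The paper's fix is to keep the parameters of $\theta$ inside an elementary extension of $M$: apply Corollary \ref{weakstable} to the pair $(N,M)$, getting $(N',M')\succ(N,M)$ and $\theta\in L(M')$ with $\theta(M^2)=\phi(M^2;c)$ and $\theta(x,y)\rightarrow^{M'}\phi(x,y;c)$. Since the pair extension is elementary, $\phi(x,y;c)$ restricted to $\pred=M'$ is still single-valued, hence so is $\theta$ on $M'^2$; and since $\theta\in L(M')$ and $M'\models T$, the $L(M')$-sentence asserting that $\theta$ defines a partial function transfers from $M'$ to $\mathbb M$. In effect you traded ``implication everywhere but uncontrolled parameters'' for the paper's ``implication only on $M'$ but parameters in a model of $T$'', and it is the latter that makes single-valuedness come for free.
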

\begin{proof}
Let $\phi(x,y;c)$ induce $f$ on $M$, $c \in N \succ M$. By Corollary \ref{weakstable} we find $(N',M') \succ (N,M)$ and $\theta(x,y) \in L(M')$ satisfying $\theta(M^2)=\phi(M^2,c)$ and $\theta(x,y) \rightarrow^{M'} \phi(x,y;c)$. As the extension of pairs is elementary and $M' \models T$, it follows that $\theta(x,y)$ is a graph of a global partial function.
\end{proof}

\begin{defi}[Honest definition]\label{honestDef}
Let $X\subseteq M^k$ be externally definable. Then an honest definition of $X$ is a definition $\phi(x,d)$ of $X$, $d\in \mathbb M$ such that:
\begin{description}
\item $\mathbb M \models \phi(x,d) \rightarrow \psi(x)$ for every $\psi(x) \in L(M)$ such that $X\subseteq \psi(M)$.
\end{description}
\end{defi}

In Section 2, we will need the notion of an honest definition \emph{over $A$} which is defined at the beginning of that section.

\begin{prop}\label{HonestDefExist}
Let $T$ be $NIP$. Then every externally definable set $X\subset M^k$ has an honest definition.
\end{prop}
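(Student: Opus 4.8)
The plan is to reduce everything to weak stable-embeddedness (Corollary \ref{weakstable}), using the following guiding principle: honesty is a \emph{containment between two sets defined over a model}, and once such a containment is known to hold in a model it holds in $\mathbb M$ by elementarity. So I will manufacture $\theta$ over an elementary extension $M'$ of $M$ for which honesty can be verified inside the model $M'$, and then transfer.

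Concretely, I would first write $X=\phi(M,c)$ for a formula $\phi(x,y)$ and some $c\in\mathbb M$, and fix a small $N$ with $M\preceq N\preceq\mathbb M$ and $c\in N$. As $T$ is $NIP$, the formula $\phi(x,y)$ is dependent, so I may view $c$ as a parameter in the ambient structure $N$ of the pair $(N,M)$ (predicate $\pred=M$) and apply Corollary \ref{weakstable} to this pair, with the formula $\phi$ and the parameter $c\in N$. This produces an elementary extension of pairs $(N',M')\succeq(N,M)$ together with a formula $\theta(x)\in L(M')$ satisfying
\[
\theta(M)=\phi(M,c)=X\qquad\text{and}\qquad \theta(x)\rightarrow^{M'}\phi(x,c).
\]
In particular $\theta$ is a definition of $X$ with parameters $d\in M'\subseteq\mathbb M$, and it remains only to check honesty.

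For honesty, fix any $\psi(x)\in L(M)$ with $X\subseteq\psi(M)$; I claim $\theta(M')\subseteq\psi(M')$. Indeed, the hypothesis $X\subseteq\psi(M)$ says exactly that $(N,M)\models\forall x\,\big(\pred(x)\wedge\phi(x,c)\rightarrow\psi(x)\big)$, which is an $L_{\pred}$-sentence with parameters $c$ and the finitely many parameters of $\psi$, all lying in $(N,M)$. Since $(N',M')\succeq(N,M)$, the same sentence holds in $(N',M')$, i.e. $\phi(x,c)\rightarrow^{M'}\psi(x)$. Chaining this with $\theta(x)\rightarrow^{M'}\phi(x,c)$ gives $\theta(x)\rightarrow^{M'}\psi(x)$, that is $\theta(M')\subseteq\psi(M')$. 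Now $\theta(x)$ and $\psi(x)$ are both $L(M')$-formulas, so $\forall x\,(\theta(x)\rightarrow\psi(x))$ is an $L(M')$-sentence true in $M'$; since $M'\preceq\mathbb M$ it holds in $\mathbb M$, giving $\mathbb M\models\theta(x,d)\rightarrow\psi(x)$. As $\psi$ was an arbitrary $M$-definable superset of $X$, this is precisely the honesty of $\theta$.

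The step I expect to be the main point to get right is this final transfer. Note that the auxiliary implication $\theta\rightarrow\phi(x,c)$ genuinely involves the external parameter $c\notin M'$, so $\forall x\,(\theta\rightarrow\phi(x,c))$ is \emph{not} an $L(M')$-sentence and cannot be pushed to $\mathbb M$ directly; it is used only at the level of elements of $M'$, where $c$ is a harmless fixed parameter of the pair. It is therefore crucial that $\psi$ carries only parameters from $M\subseteq M'$ and that $\theta$ was produced over $M'$, so that the composite implication $\theta\rightarrow\psi$ has eliminated $c$ and becomes a containment of two $M'$-definable sets established \emph{inside} the model $M'$, which is exactly the form that elementarity of $M'\preceq\mathbb M$ lets us lift to the monster.
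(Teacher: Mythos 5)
Your proof is correct and follows essentially the same route as the paper's: apply Corollary \ref{weakstable} to the pair $(N,M)$ to get $\theta(x)\in L(M')$ with $\theta(x)\rightarrow^{M'}\phi(x,c)$, transfer the $L_{\pred}$-sentence expressing $X\subseteq\psi(M)$ to $(N',M')$, chain the two implications over $M'$, and lift the resulting $L(M')$-containment to $\mathbb M$ by elementarity. Your closing remark about why the intermediate implication involving $c$ cannot itself be lifted is exactly the point the paper's proof is silently careful about.
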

\begin{proof}
Let $M\prec N$ and $\phi(x)\in L(N)$ be a definition of $X$, and let $(N',M')\succeq(N,M)$
be $|N|^{+}$-saturated (in $L_P$). Let $\theta(x)\in L(M')$ as given by Corollary \ref{weakstable}, so $(N',M')\models(\forall x\in \pred)\,\theta(x)\rightarrow\phi(x)$. If $\psi(x)\in L(M)$
with $X\subseteq\psi(M)$ then $(N',M') \models(\forall x\in \pred)\,\phi(x)\rightarrow\psi(x)$.
Combining, we get $(N',M') \models(\forall x\in \pred)\,\theta(x)\rightarrow\psi(x)$. But
since $M'\models T$
and $\theta(x),\psi(x)\in L(M')$ we have finally $M' \models \theta(x)\rightarrow\psi(x)$.
\end{proof}

We illustrate this notion with an o-minimal example inspired by \cite{Poizat}.

We let $M_0$ be the real closure of $\mathbb Q$ and let $\epsilon>0$ be an infinitesimal element. Let $M$ be the real closure of $M_0(\epsilon)$. Let $\pi$ be the usual transcendental number, and finally let $N$ be the real closure of $M(\pi)$.

\begin{lemme}
Let $0<b\in N$ be infinitesimal, then there is $n\in \mathbb N$ such that $b<\epsilon^{1/n}$.
\end{lemme}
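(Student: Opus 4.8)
The plan is to exploit the concrete description of the fields involved and reduce the claim to a statement about the valuation induced by the infinitesimals. Recall the tower $M_0 = \mathbb{Q}^{\mathrm{rc}}$, then $M = M_0(\epsilon)^{\mathrm{rc}}$ with $\epsilon$ infinitesimal over $M_0$, and finally $N = M(\pi)^{\mathrm{rc}}$ with $\pi$ transcendental. First I would fix the \emph{convex valuation} $v$ on $N$ whose valuation ring is the convex hull of $\mathbb{Q}$ (equivalently of $M_0$) inside $N$; an element is infinitesimal precisely when it has strictly positive value, so $b$ infinitesimal means $v(b)>0$ and we want $v(b) < v(\epsilon^{1/n}) = \tfrac{1}{n} v(\epsilon)$ for some $n$, i.e. $v(b) < \tfrac{1}{n}v(\epsilon)$.

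The key step is to understand the value group $\Gamma = v(N^{\times})$ and to locate $v(b)$ relative to $v(\epsilon)$ inside it. Since $N$ is real closed, $\Gamma$ is divisible, and since $N$ is the real closure of $M(\pi)$ where $\pi$ is transcendental over $M$, every element of $N$ is algebraic over $M(\pi)$. The crucial observation is that $\pi$, although transcendental, sits in the convex hull of $M_0$ (indeed of $\mathbb{Q}$), so $v(\pi)=0$: adjoining $\pi$ and passing to the real closure does not enlarge the value group beyond the divisible hull of $\Gamma_M = v(M^{\times})$. Thus $\Gamma$ is the divisible hull of $\Gamma_M$, and $\Gamma_M$ itself is the divisible hull of $\langle v(\epsilon)\rangle$, because $M$ is the real closure of $M_0(\epsilon)$ and $M_0$ lies in the residue field (so $v$ is trivial on $M_0^\times$). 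Hence $\Gamma = \mathbb{Q}\cdot v(\epsilon)$ is an Archimedean copy of $\mathbb{Q}$, generated as an ordered group by the single positive element $\gamma_0 = v(\epsilon)$.

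Given this, the lemma is immediate from the Archimedean property of $\mathbb{Q}$: write $v(b) = \tfrac{p}{q}\,\gamma_0$ with $p,q$ positive integers (positivity since $b$ is infinitesimal, so $v(b)>0$). Then choosing any integer $n$ with $n > q/p$, i.e. $\tfrac{p}{q} > \tfrac{1}{n}$, gives $v(b) = \tfrac{p}{q}\gamma_0 > \tfrac{1}{n}\gamma_0 = v(\epsilon^{1/n})$, which in a convex valuation means exactly $b < \epsilon^{1/n}$ (both being positive infinitesimals, the larger value corresponds to the smaller element). I would then translate back: $v(b) > v(\epsilon^{1/n})$ together with positivity yields $0 < b < \epsilon^{1/n}$ in the ordered field $N$.

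The main obstacle I anticipate is the claim that $v(\pi)=0$, i.e. that adjoining the transcendental $\pi$ and taking the real closure does not introduce new archimedean classes of infinitesimals beyond those coming from $\epsilon$. One must justify that $\pi$ is not infinitesimal and not infinite over $M$ — which is clear since $\pi$ is a genuine standard real number, comparable to rationals, hence $v(\pi)=0$ — and, more substantively, that the real closure of $M(\pi)$ cannot produce an element whose value is incommensurable with $v(\epsilon)$. This follows because algebraic extensions of a real closed (or more generally henselian) valued field only divide the value group, keeping it within the divisible hull of $\Gamma_M$; since $\Gamma_M$ is already $\mathbb{Q}\cdot v(\epsilon)$, no incommensurable element can appear. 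Making this valuation-theoretic bookkeeping precise is the only real content; once $\Gamma \cong \mathbb{Q}$ as an ordered group with generator $v(\epsilon)$, the conclusion is pure Archimedean arithmetic.
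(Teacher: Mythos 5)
Your proof is correct and follows essentially the same route as the paper: both pass to the convex valuation whose valuation ring is the convex hull of $\mathbb{Q}$, observe that the value group of $N$ lies in the divisible hull of the group generated by $v(\epsilon)$ (the paper quotes the standard fact for algebraic extensions applied to $N$ over $\mathbb{Q}(\pi,\epsilon)$, while you build it up in two stages through $M$), and conclude by the Archimedean property of $\mathbb{Q}$. One slip worth fixing: in your first paragraph you state the target as $v(b) < v(\epsilon^{1/n})$, which is the wrong direction, but your final computation correctly establishes $v(b) > v(\epsilon^{1/n})$ and hence $b < \epsilon^{1/n}$.
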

\begin{proof}
We define a valuation $v$ on $\mathbb Q(\pi,\epsilon)$ by setting $v(x)=0$ for all $x\in \mathbb Q(\pi)$ and $v(\epsilon)=1$. We also define a valuation on $N$ with the following standard construction: let $\mathcal O\subset N$ be the convex closure of $\mathbb Q$ and $\mathfrak M$ be the ring of infinitesimals. Then $\mathcal O$ is a valuation ring, namely every element of $N$ or its inverse lies in it. It has $\mathfrak M$ as unique maximal ideal. There is therefore a valuation $v'$ on $N$ such that $v'(x)\geq 0$ on $\mathcal O$ and $v'(x)>0$ on $\mathfrak M$. Renaming the value group, we can set $v'(\epsilon)=1$. Then $v'$ extends the valuation $v$. As $N$ is in the algebraic closure of $\mathbb Q(\epsilon,\pi)$, by standard results on valuation theory (see for example \cite{Prestel}, Theorem 3.2.4), the value group of $v'$ is in the divisible hull of the value group of $v$.

Let $b\in N$ be a positive infinitesimal. By the previous argument $v'(b)$ is rational, so there is $n\in \mathbb N$ such that $v'(b)>v'(\epsilon^{1/n})$. Then $v'(b/(\epsilon^{1/n}))>0$, so $b/(\epsilon^{1/n})$ is infinitesimal and in particular $b<\epsilon^{1/n}$.
\end{proof}

Let $A=\{x\in M : x<\pi\}$. So $A$ is an externally definable initial segment of $M$. Consider the externally definable set $X=\{(x,y)\in M^2 : x\in A \wedge y\notin A\}$. Let $\phi(x,y;t)=(x<t \wedge y>t)$. Then $\phi(x,y;\pi)$ is a definition of $X$. However it is not an honest definition because it is not included in the $M$-definable set $\{(x,y):y-x > \epsilon\}$. We actually show more.

\vspace{5pt}

\underline{Claim 1:} There is no honest definition of $X$ with parameters in $N$.

Proof: Assume that $\chi(x,y)$ is such a definition. Consider $c=\inf\{y-x : y-x>0 \wedge \chi(x,y)\}$. Then $c\in N$. For every $0<\epsilon \in M$ infinitesimal, we have $c>\epsilon$ by the same argument as above. By the previous lemma, there is $0<e\in \mathbb Q$ such that $c>e$. This is absurd as $\chi(x,y) \supseteq X$.

\vspace{5pt}

Let $p$ be the global 1-type such that for $a\in \mathbb M$, $p\vdash x>a$ if and only if there is $b\in A\subset M$ such that $a<b$. Thus $p$ is finitely satisfiable in $M$. Let $a_0=\pi$ and $a_1 \models p|_N$. Consider the formula $\psi(x,y;a_0,a_1)=(x<a_1 \wedge y>a_0)$. 

\vspace{5pt}

\underline{Claim 2:} The formula $\psi$ is an honest definition of $X$.

Proof: Let $\theta(x,y)\in L(M)$ be a definable set. Assume that $X \subseteq \theta(M^2)$ and for a contradiction that $\mathbb M \models (\exists x,y) \psi(x,y;a_0,a_1) \wedge \neg \theta(x,y)$. As $p$ is finitely satisfiable in $M$, there is $u_0 \in M$ such that $\models (\exists x,y) x<u_0 \wedge y>a_0 \wedge \neg \theta(x,y)$. Consider the $M$-definable set $\{v: (\exists x,y) x<u_0 \wedge y>v \wedge \neg \theta(x,y) \}$. By o-minimality, this set has a supremum $m\in M \cup \{+ \infty\}$. We know $m\geq a_0$, so necessarily there is $v_0\in M$, $v_0\notin A$ such that $M \models (\exists x,y) x<u_0 \wedge y>v_0 \wedge \neg \theta(x,y)$. This contradicts the fact that $X\subseteq \theta(M^2)$.

\vspace{5pt}

We therefore see that if $\phi(x,y;a)$ is a formula and $M$ a model, then one cannot in general obtain an honest definition of $\phi(M^2;a)$ with the same parameter $a$. We conjecture that one can find such an honest definition with parameters in a Morley sequence of any coheir of $\tp(a/M)$.

As an application, we give another proof of Shelah's expansion theorem from \cite{Sh783}.

\begin{prop}\label{HonestDefImage}($T$ is $NIP$)
Let $X\subseteq M^k$ be an externally definable set and $f$ an $M$-definable function. Then $f(X)$ is externally definable.
\end{prop}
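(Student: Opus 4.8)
The plan is to realise the slogan from the introduction: the image of $X$ under $f$ is the trace on $M$ of the image of an honest definition of $X$. Since $T$ is $NIP$, Proposition \ref{HonestDefExist} provides an honest definition $\phi(x,d)$ of $X$ with $d\in\mathbb M$; thus $\phi(M^k,d)=X$ and, crucially, $\mathbb M\models\phi(x,d)\rightarrow\psi(x)$ for every $\psi(x)\in L(M)$ with $X\subseteq\psi(M)$. Writing $\gamma(x,y)\in L(M)$ for the graph of $f$, which by elementarity of $M\prec\mathbb M$ defines a total function on $\mathbb M$ extending $f$, I would set $\chi(y):=\exists x\,(\phi(x,d)\wedge\gamma(x,y))$. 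This is a formula over $d$, so its trace $\chi(M)$ is externally definable; the task is then to prove $\chi(M)=f(X)$.

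One inclusion is immediate: if $b\in f(X)$, choose $a\in X$ with $f(a)=b$; then $a\in\phi(\mathbb M,d)$ and $b=f(a)\in M$, so $\mathbb M\models\chi(b)$ and $b\in\chi(M)$. For the reverse inclusion, take $b\in M$ with $\mathbb M\models\chi(b)$, witnessed by some $a\in\mathbb M$ (a priori not in $M$) with $\phi(a,d)$ and $f(a)=b$. Suppose toward a contradiction that $b\notin f(X)$. Then every element of $X$ avoids the fibre over $b$, i.e. $X\subseteq\{x:f(x)\neq b\}$; as $b\in M$ and $f$ is $M$-definable, this superset is $M$-definable. Honesty of $\phi(x,d)$ now forces $\mathbb M\models\phi(x,d)\rightarrow f(x)\neq b$, contradicting $\phi(a,d)\wedge f(a)=b$. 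Hence $b\in f(X)$, so $\chi(M)=f(X)$ is externally definable.

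The only real content lies in the reverse inclusion, and specifically in the honest-definition property, which is exactly what controls the external witnesses $a\in\mathbb M\setminus M$: without honesty one could not exclude an outside $a$ producing a value $b\notin f(X)$. I therefore expect the sole obstacle to be packaged already in Proposition \ref{HonestDefExist}, the remaining arguments being routine. Taking $f$ to be a coordinate projection recovers Shelah's expansion theorem.
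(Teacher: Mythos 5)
Your proof is correct and is essentially identical to the paper's: both take an honest definition $\phi(x,d)$ of $X$, form $(\exists x)(\phi(x,d)\wedge f(x)=y)$, and use honesty against the $M$-definable set $\{x: f(x)\neq b\}$ to rule out external witnesses mapping to points outside $f(X)$. No issues.
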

\begin{proof}
Let $\phi(x,c)$ be an honest definition of $X$. We show that $\theta(y,c)=(\exists x)(\phi(x,c) \wedge f(x)=y)$ is a definition of $f(X)$. First, as $\phi(x,c)$ is a definition of $X$, we have $f(X) \subseteq \theta(M,c)$. Conversely, consider a tuple $a\in M^k\setminus f(X)$. Let $\psi(x)=(f(x)\neq a)$. Then $X \subseteq \psi(M)$. So by definition of an honest definition, $\mathbb M \models \phi(x,c) \rightarrow \psi(x)$. This implies that $\mathbb M \models \neg \theta(a,c)$. Thus $\theta(M,c)\subseteq f(X)$.

In fact one can check that $\theta(y,c)$ is an honest definition of $f(X)$.
\end{proof}

\begin{cor}[Shelah's expansion theorem]\label{ShelahExp}
Let $M\models T$, be $NIP$ and let $M^{Sh}$ denote the expansion of $M$ where we add a predicate for all externally definable sets of $M^k$, for all $k$. Then $M^{Sh}$ has elimination of quantifiers in this language and is $NIP$.
\end{cor}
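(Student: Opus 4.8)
The plan is to exploit the two closure properties of the family $\mathcal{E}$ of externally definable subsets of the various $M^{k}$: it is a Boolean algebra (combine the defining $L$-formulas and their external parameters), and, by Proposition \ref{HonestDefImage} applied to coordinate projections (which are $M$-definable functions), it is closed under projection. Fix a monster model $\mathbb{M}\succ M$ of $T$, so that every $X\in\mathcal{E}$ has the form $\psi_{X}(M^{k},d_{X})$ for some $L$-formula $\psi_{X}$ and $d_{X}\in\mathbb{M}$. Writing $L^{Sh}$ for the expanded language and $R_{X}$ for the predicate naming $X$, I first record that the quantifier-free $L^{Sh}$-definable subsets of $M^{n}$ (with parameters in $M$) are exactly the members of $\mathcal{E}$: each such set is a Boolean combination of traces of atomic formulas $R_{X}(\bar t(\bar x,\bar m))$ and $L$-atomic formulas, all of which lie in $\mathcal{E}$ (the former because $\mathcal{E}$ is closed under preimages by $M$-definable maps), while conversely every $X\in\mathcal{E}$ is defined by the quantifier-free formula $R_{X}(\bar x)$.

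For quantifier elimination I would use the standard local criterion: it suffices to show that for every quantifier-free $\chi(\bar x,y)$ the formula $\exists y\,\chi(\bar x,y)$ is $Th(M^{Sh})$-equivalent to a quantifier-free one. The set $\chi(M^{n+1})$ lies in $\mathcal{E}$, hence so does its projection $Z$ onto $\bar x$, by Proposition \ref{HonestDefImage}. Then $R_{Z}$ is a symbol of $L^{Sh}$ and $M^{Sh}\models\forall\bar x\,(\exists y\,\chi(\bar x,y)\leftrightarrow R_{Z}(\bar x))$. The point to stress is that this last equivalence is a single $L^{Sh}$-sentence true in $M^{Sh}$, hence it belongs to $Th(M^{Sh})$; so the reduction is valid in every model and the uniformity needed for theory-level quantifier elimination comes for free. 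An easy induction on formula complexity then gives elimination of quantifiers.

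For $NIP$ I would reduce, via the quantifier elimination just obtained, to checking that every quantifier-free $\phi(\bar x;\bar y)$ is $NIP$. Since the trace of such a $\phi$ on $M$ lies in $\mathcal{E}$, it equals $\psi(M,e)$ for a single $L$-formula $\psi(\bar u,\bar w)$ and $e\in\mathbb{M}$, with $\bar u=(\bar x,\bar y)$; that is, for tuples from $M$ one has $\phi(\bar x;\bar y)\Leftrightarrow\mathbb{M}\models\psi(\bar x\bar y,e)$. I would use the shattering formulation of the independence property: for each fixed $n$, ``$\phi$ shatters $n$ points'' is a single first-order $L^{Sh}$-sentence, so $\phi$ has $IP$ in $Th(M^{Sh})$ if and only if it already has $IP$ in $M^{Sh}$ itself, where the predicates carry their literal meaning. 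A shattering configuration $(\bar a_{i})_{i<n}$, $(\bar b_{s})_{s\subseteq n}$ in $M$ with $\phi(\bar a_{i};\bar b_{s})\Leftrightarrow i\in s$ then yields $\mathbb{M}\models\psi(\bar a_{i}\bar b_{s},e)\Leftrightarrow i\in s$, i.e. the $L$-formula $\psi'(\bar x;\bar y\bar w):=\psi(\bar x\bar y,\bar w)$ shatters $n$ points in $\mathbb{M}$ with objects $\bar a_{i}$ and parameters $(\bar b_{s},e)$. Letting $n\to\infty$ contradicts $NIP$ of $T$, so $\phi$ is $NIP$ and therefore $Th(M^{Sh})$ is $NIP$.

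The main obstacle, and the step deserving the most care, is precisely this transfer of $NIP$ between $M^{Sh}$ and its complete theory: in a proper elementary extension of $M^{Sh}$ the symbols $R_{X}$ no longer denote the externally definable sets $X$ literally, so one cannot argue directly there. Routing through the finite, first-order ``shatters $n$ points'' conditions lets me stay inside $M^{Sh}$, where each $R_{X}$ is genuinely $\psi_{X}(M,d_{X})$; and absorbing the entire quantifier-free formula into a single external $L$-formula $\psi$ avoids any difficulty with terms mixing the object and parameter variables, keeping the partition clean when the shattering pattern is pushed down to $\mathbb{M}$.
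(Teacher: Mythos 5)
Your proposal is correct and follows essentially the same route as the paper: quantifier elimination is obtained from Proposition \ref{HonestDefImage} applied to projections, and $NIP$ then reduces to quantifier-free formulas, whose traces on $M$ are externally definable by $L$-formulas and hence dependent since $T$ is. You simply spell out the details (closure of externally definable sets under Boolean operations and preimages, the transfer of quantifier elimination and of the finite shattering conditions from $M^{Sh}$ to $Th(M^{Sh})$) that the paper's two-line proof leaves implicit.
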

\begin{proof}
Elimination of quantifiers follows from the previous proposition, taking $f$ to be a projection. As $T$ is $NIP$, it is clear that all quantifier free formulas of $M^{Sh}$ are dependent. It follows that $M^{Sh}$ is dependent.
\end{proof}

Note that there is an asymmetry in the notion of an honest definition. Namely if $\theta(x)$ is an honest definition of some $X\subset M$, then $\neg \theta(x)$ is not in general an honest definition of $M\setminus X$. We do not know about existence of \emph{symmetric} honest definitions which would satisfy this. All we can do is have an honest definition contain one (or indeed finitely many) uniformly definable family of sets. This is the content of the next proposition.

\begin{prop}\label{honest2} ($T$ is $NIP$)
Let $X\subseteq M^k$ be externally definable. Let $\zeta(x,y)\in L$. Define $\Omega=\{ y\in M : \zeta(M,y) \subseteq X\}$. Assume that $\bigcup_{y\in \Omega} \zeta(M,y) = X$.
 
Then there is a formula $\theta(x,y)$ and $d\in \mathbb M$ such that:
\begin{enumerate}
\item $\theta(x,d)$ is an honest definition of $X$,
\item $\mathbb M \models \zeta(x,c) \rightarrow \theta(x,d)$ for every $c\in \Omega$,
\item For any $c_1,..,c_n \in \Omega$, there is $d'\in M$ such that $\theta(M,d') \subseteq X$, and $\zeta(x,c_i) \rightarrow \theta(x,d')$ holds for all $i$.
\end{enumerate}
\end{prop}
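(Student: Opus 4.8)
The plan is to manufacture $\theta$ and its external parameter through the honest-definition machinery of Proposition~\ref{HonestLemma}, applied inside the pair, after first arranging the parameter defining $X$ to be a coheir over $M$. The coheir is what will let the two global containments in (2) and the existential clause (3) descend to parameters from $M$ by finite satisfiability.

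For the setup I would write $X=\phi(M,c_0)$ and replace $c_0$ by a realization $c$ of a global coheir of $\tp(c_0/M)$; as the coheir does not change the type over $M$, we still have $\phi(M,c)=X$, and now $\tp(c/M)$ is finitely satisfiable in $M$. I work in the pair $(N,M)$ with $\pred=M$, pass to a highly saturated $(N',M')\succeq(N,M)$, and apply Proposition~\ref{HonestLemma} with $L'=L_\pred$, $p(x)=\{\pred(x)\}$ and $A=M$. I want the output $\theta(x,d)\in L(M')$ to satisfy two containments on the predicate: $\theta(x,d)\to^{M'}\phi(x,c)$ (tightness from above, which yields honesty) and $\zeta(x,c')\to^{M'}\theta(x,d)$ for every $c'\in\Omega$ (the family from below). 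For the family clause, note that $\zeta(M,c')\subseteq X=\phi(M,c)$ is an $L_\pred$-fact true in $(N,M)$, hence $\zeta(M',c')\subseteq\phi(M',c)$; so once $\theta$ sits between the $\zeta$-pieces and $\phi$ on $M'$, both containments are $L(M')$-statements true in $M'\models T$ and therefore transfer to all of $\mathbb M$. This gives (2) globally and, combined with $\phi(x,c)\to^{M'}\psi(x)$ for every $M$-definable $\psi\supseteq X$, gives (1) exactly as in Proposition~\ref{HonestDefExist}.

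With such a $\theta$ in hand I would read off (3) by finite satisfiability. Fixing $c_1,\dots,c_n\in\Omega$, the condition on the parameter $y$ expressing simultaneously $\theta(M',y)\subseteq\phi(M',c)$ (i.e.\ $\theta(M,y)\subseteq X$) and $\bigwedge_i \zeta(x,c_i)\to\theta(x,y)$ is an $L_\pred(Mc)$-condition satisfied by $d$. Since $\tp(d/Mc)$ is finitely satisfiable in $M$ — this is where the coheir choice of $c$, together with arranging $d$ itself to be finitely satisfiable in $M$, is used — the condition is realized by some $d'\in M$, which is the required parameter.

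The main obstacle is precisely the simultaneous squeeze demanded in the setup: producing a single $L$-formula that is at the same time tight from above (so that it is honest) and large enough to lie above the entire family $\{\zeta(\cdot,c'):c'\in\Omega\}$ from below, with trace exactly $X$ on $M$. Neither the ``inner'' application of Proposition~\ref{HonestLemma} to $\phi$ (which is automatically honest but need not contain the family globally) nor the ``outer'' application to $\neg\phi$ (which contains the family but need not be honest) delivers both at once; the reconciliation has to come from clause (3) of Proposition~\ref{HonestLemma} — the absence of $A$-invariant types in $\phi\setminus\theta$ — applied with $A=M$, so that a point of some $\zeta(M',c')\setminus\theta$ would have to generate an $M$-invariant type contradicting that clause. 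Turning the mere existence of such a bad point into a bona fide $M$-invariant type is the step I expect to require the most care, and it is exactly what forces the coheir hypotheses above.
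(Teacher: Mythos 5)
There is a genuine gap at the central step, and you have in fact located it yourself: you never produce a formula $\theta(x,d)$ that is simultaneously honest from above ($\theta(x,d)\rightarrow^{M'}\phi(x,c)$) and contains the whole family $\{\zeta(x,c'):c'\in\Omega\}$ from below. Your proposed reconciliation via clause (3) of Proposition \ref{HonestLemma} does not work as stated: that clause only says that $\phi(x,c)\setminus\theta(x)$ contains no \emph{$M$-invariant} global type consistent with $p$, which in no way prevents a point of $\zeta(M',c')$ from landing in $\phi\setminus\theta$ --- such a point has no reason to generate an $M$-invariant type, and you give no mechanism for turning it into one. Note also that your argument never uses the hypothesis $\bigcup_{y\in\Omega}\zeta(M,y)=X$, which is needed even to get the trace of $\theta$ on $M$ to be exactly $X$; this is a further sign that the construction is missing. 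Finally, the coheir maneuver for clause (3) is both unjustified (the parameter $d$ produced by Proposition \ref{HonestLemma} lives in $\pred$ of an elementary extension of the pair, but nothing makes $\tp(d/Mc)$ finitely satisfiable in $M$) and unnecessary: the relevant condition on $d$ is an $L_\pred$-formula of the form $(\exists d\in\pred)(\dots)$ with parameters in $N c_1\dots c_n$, so it transfers from $(N',M')$ to $(N,M)$ by elementarity of the pair extension, yielding $d'\in M$ directly.

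The missing idea in the paper's proof is an indirection: instead of applying the honest-definition machinery to $X$ itself, apply it to the \emph{parameter set} $\Omega=\{y\in M:\zeta(M,y)\subseteq X\}$. By Shelah's expansion theorem (Corollary \ref{ShelahExp}) $\Omega$ is externally definable, say $\Omega=\psi(M)$ with $\psi\in L(N)$; Proposition \ref{HonestLemma} applied in the pair with $p(y)=\{\pred(y)\}$, $A=M$ gives $\alpha(y,d)\in L(M')$ with $\alpha(M,d)=\Omega$ and $\alpha(y,d)\rightarrow^{\pred(y)}\psi(y)$. Setting $\theta(x,d)=(\exists y)(\alpha(y,d)\wedge\zeta(x,y))$ makes clause (2) automatic (each $c'\in\Omega$ satisfies $\alpha(c',d)$), gives $\theta(M,d)=X$ using the covering hypothesis, and honesty is inherited from the honesty of $\alpha$ together with the transferred defining property of $\psi$. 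Your direct squeeze on $\phi(x,c)$ bypasses this and, as you suspected, cannot be completed with the tools you invoke.
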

\begin{proof}
Let $M\prec N$ where $N$ is $|M|^+$-saturated. Consider the set $Y\subset M$ defined by $$y\in Y \iff (\forall x\in M) (\zeta(x,y) \rightarrow x\in X).$$ By Corollary \ref{ShelahExp}, this is an externally definable subset of $M$, so there is $\psi(x)\in L(N)$ a definition of it. Let also $\phi(x)\in L(N)$ be a definition of $X$. Let $(N,M)\prec (N',M')$ be an elementary extension of the pair, sufficiently saturated. Applying Proposition \ref{HonestLemma} with $p(y)=\{\pred(y)\}$, $A=M$ we obtain a formula $\alpha(y,d) \in L(M')$ such that $\alpha(M,d)=\psi(M)$ and $N'\models \alpha(y,d) \rightarrow^{\pred(y)} \psi(y)$. Set $\theta(x,d)=(\exists y)(\alpha(y,d)\wedge \zeta(x,y))$. We check that $\theta(x,d)$ satisfies the required properties.

First, let $a\in M'$ such that $N'\models \theta(a,d)$. Then as $M'\prec N'$, there is $y_0\in M'$ such that $\alpha(y_0,d) \wedge \zeta(a,y_0)$. By construction of $\alpha(y,d)$, this implies that $N' \models \psi(y_0)$. So by definition of $\psi(y)$, $N'\models \phi(a)$, so $N'\models \theta(x,d) \impl^{\pred(x)} \phi(x)$. Now, assume that $a\in X$. By hypothesis, there is $y_0\in \Omega$ such that $M\models \zeta(a,y_0)$. Then $\psi(y_0)$ holds, and as $y_0\in M$, $N'\models \alpha(y_0,d)$. Therefore $N'\models \theta(a,d)$. This proves that $\theta(x,d)$ is an honest definition of $X$.

Next, if $c\in \Omega$, then $N' \models \alpha(c,d)$, so $N' \models \zeta(x,c) \impl \theta(x,d)$.

Finally, let $c_1,...,c_n \in \Omega$. Then $N' \models (\exists d\in \pred)(\bigwedge \zeta(x,c_i) \impl^{\pred(x)} \theta(x,d))\wedge(\theta(x,d) \impl^{\pred(x)} \phi(x))$. By elementarity, $(N,M)$ also satisfies that formula. This gives us the required $d'$.
\end{proof}

Note in particular that the hypothesis on $\zeta(x,y)$ is always satisfied for $\zeta(x,y)=(x=y)$. As an application, we obtain that large externally definable sets contain infinite definable sets.

\begin{cor}\label{infDefSubset} ($T$ is $NIP$)
Let $X\subseteq M^k$ be externally definable, then if one of the two following conditions is satisfied, $X$ contains an infinite $M$-definable set.
\begin{enumerate}
\item $X$ is infinite and $T$ eliminates the quantifier $\exists^{\infty}$.
\item $|X| \geq \beth_\omega$.
\end{enumerate}
\end{cor}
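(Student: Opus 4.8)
The plan is to apply Proposition~\ref{honest2} with $\zeta(x,y)=(x=y)$, as suggested by the remark preceding the statement. Here $\Omega=X$ and $\bigcup_{y\in\Omega}\zeta(M,y)=X$, so the hypothesis holds and we obtain a formula $\theta(x,d)$ which is an honest definition of $X$ and, crucially, satisfies clause~(3): for every finite tuple $c_1,\dots,c_n\in X$ there is $d'\in M$ with $\{c_1,\dots,c_n\}\subseteq\theta(M,d')\subseteq X$. Thus the $M$-definable sets of the form $\theta(M,d')\subseteq X$ (with $d'\in M$) cover $X$, are directed under inclusion, and capture every finite subset of $X$ inside a single member. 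Note that $\theta(x,y)$ is an $L$-formula. It therefore suffices to show that, under either hypothesis, one of these members is infinite; so I would assume toward a contradiction that every $\theta(M,d')\subseteq X$ is finite.

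For the first case I would invoke elimination of $\exists^\infty$ applied to $\theta(x,y)$: this yields an integer $N$ with $|\theta(M,d')|\le N$ whenever it is finite. Picking $N+1$ distinct elements of the infinite set $X$ and capturing them by clause~(3) inside a single $\theta(M,d')$, which then has size $\ge N+1$, contradicts the bound. Hence some $\theta(M,d')\subseteq X$ is infinite, giving the desired $M$-definable subset. I expect this case to be routine.

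For the second case pure combinatorics is insufficient, since a directed, uniformly (even $NIP$-)definable family of finite sets can have arbitrarily large union; the model theory must enter through the size hypothesis. As $|X|\ge\beth_\omega$ and the language is (essentially) countable, the Erd\H{o}s--Rado theorem lets me extract an infinite sequence $(a_i)_{i<\omega}$ of distinct elements of $X$ that is indiscernible, and I would arrange it to be indiscernible over the parameter $d$ of the honest definition. Passing to a sufficiently saturated elementary extension $(N',M')$ of the pair, I would realise the type in $y$ asserting $\pred(y)$, $\theta(a_i,y)$ for all $i<\omega$, and $\forall x\,(\pred(x)\wedge\theta(x,y)\to\theta(x,d))$. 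Clause~(3) together with indiscernibility makes this type finitely satisfiable, so it has a realisation $b^*$ whose trace $\theta(\pred,b^*)$ is contained in $X$ and contains all the $a_i$, hence is genuinely infinite.

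The main obstacle is precisely transferring this conclusion back to $M$: the witness $b^*$ lives in the extension, and ``the trace is infinite'' is not first order, so elementarity only returns elements of $M$ with traces of unbounded \emph{finite} size, not an infinite one. To overcome this I would reread the construction as producing, with parameters taken from the sequence $(a_i)\subseteq M$ itself, the set of points co-captured with a fixed $a_0$ inside $X$; the content of the argument is that external definability of $X$ together with $NIP$ forces this set to be infinite and contained in $X$. The remaining delicate point — upgrading such an infinite \emph{externally} definable subset of $X$ to an infinite genuinely \emph{$M$-}definable one, i.e.\ removing the external parameter needed to express ``$\subseteq X$'' — is what I expect to be the crux of the whole corollary.
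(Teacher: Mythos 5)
Your case (1) is correct and is essentially the paper's own argument. Case (2), however, contains a genuine gap, and you have located it yourself: realising a type in a saturated extension of the pair produces a witness $b^*$ living outside $M$, and there is no way to transfer the infinitude of its trace back to an $M$-definable set, since ``infinite'' is not first order. The step you defer as ``the crux'' is precisely what your route cannot deliver, so the proof of case (2) is incomplete as written.

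The missing idea is that the paper never leaves $M$ at all: the alternation bound does the work internally. Since $T$ is $NIP$, there are a finite set of formulas $\Delta$ and an integer $n$ such that for any $\Delta$-indiscernible sequence $(a_i)$ and any parameter $d$, the truth value of $\theta(a_i,d)$ changes at most $n$ times along the sequence. Erd\"os--Rado (this is where $\beth_\omega$ is used) extracts from $X$ a $\Delta$-indiscernible sequence $(a_i)_{i<\omega_1}$ of length $\omega_1$ --- not merely $\omega$ as in your sketch; the uncountable length matters because the gaps between the chosen points must themselves be infinite. Setting $c_k=a_{\omega\cdot k}$ for $k=0,\dots,n$ and applying clause (3) of Proposition \ref{honest2} yields $d'\in M$ with every $c_k$ in $\theta(M,d')\subseteq X$. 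Since $\theta(x,d')$ holds at the $n+1$ points $a_{\omega\cdot 0},\dots,a_{\omega\cdot n}$ and can switch truth value at most $n$ times, it must hold on a whole block $\langle a_i:\omega\cdot k\le i\le \omega\cdot(k+1)\rangle$, so $\theta(M,d')$ is an infinite $M$-definable subset of $X$. Your instinct that pure combinatorics is insufficient was right, but the model theory enters through the alternation rank of $\theta$ applied to a long indiscernible sequence, not through a type realised in an elementary extension.
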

\begin{proof}
Let $\theta(x,y)$ be the formula given by the previous proposition with $\zeta(x,y)=(x=y)$.
 
If the first assumption holds, then there is $n$ such that for every $d\in M$, if $\theta(M,d)$ has size at least $n$, it is infinite. Take $c_1,...,c_n\in X$ and $d'\in M$ given by the third point of \ref{honest2}. Then $\theta(M,d')$ is an infinite definable set contained in $X$.

Now assume that $|X| \geq \beth_\omega$. By $NIP$, there is $\Delta$ a finite set of formulas and $n$ such that if $(a_i)_{ i<\omega}$ is a $\Delta$-indiscernible sequence and $d\in \mathbb M$, there are at most $n$ indices $i$ for which $\neg(\theta(a_i,d) \leftrightarrow \theta(a_{i+1},d))$. By the Erd\"os-Rado theorem, there is a sequence $( a_i)_{i<\omega_1}$ in $X$ which is $\Delta$-indiscernible. Define $c_i=a_{\omega.i}$ for $i=0,..,n$ and let $d'$ be given by the third point of Proposition \ref{honest2}. Then $\theta(x,d')$ must contain an interval $\langle a_i: \omega\times k \leq i \leq \omega\times {k+1} \rangle$ for some $k\in \{0,..,n-1\}$. In particular it is infinite.
\end{proof}

This property does not hold in general. For example in  the random graph, for any $\kappa$ it is easy to find a model $M$ and $A \subset M$,  $|A| \geq \kappa$ such that every $M$-definable subset of $A$ is finite, while $A$ itself is externally definable.

Also, taking $M=(\mathbb N + \mathbb Z, <)$ and $X=\mathbb N$ shows that $|X|$ has to be bigger than $\aleph_0$ in \ref{infDefSubset} in general.

\begin{question}
Is it possible to replace $\beth_\omega$ by $\aleph_1$ in \ref{infDefSubset}?
\end{question}

\section{On dependent pairs}

\subsubsection*{Setting} In this section, we assume that $T$ is $NIP$. We consider a pair $(M,A)$ with $M\models T$. If $\phi(x,a)$ is some formula of $L_\pred(M)$, then an \emph{honest definition of $\phi(x,a)$ over $A$} is a formula $\theta(x,c)\in L_\pred$, $c\in \pred(\mathbb M)$ such that $\theta(A,c)=\phi(A,a)$ and $\models (\forall x\in \pred)(\theta(x,c)\impl \phi(x,a))$.

(Note that if $M\models T$, $\phi(x,c)\in L(\mathbb M)$ and $X=\phi(M,c)$, then an honest definition of $\phi(x,c)$ over $M$ in the pair $(\mathbb M,M)$ which happens to be an $L$-formula is an honest definition of $X$ in the sense of Definition \ref{honestDef}.)
\\

We say that an $L_{\mathbf{P}}$-formula is \emph{bounded} if it is of the form
$Q_0y_0 \in \mathbf{P} ... Q_ny_n \in \mathbf{P}\, \phi (x,y_0,...,y_n)$
where $Q_i \in \{\exists, \forall \}$ and $\phi(x,\bar y)$ is an $L$-formula, and let $L_{\mathbf{P}}^{bdd}$ be the collection of all bounded formulas.
We say that $T_{\mathbf{P}}$ is bounded if every formula is equivalent to a bounded one. 

Recall that a formula $\phi(x,y)\in L_\pred$ is said to be \emph{$NIP$ over $\pred(x)$} if there is no $L_\pred$-indiscernible (equivalently $L$-indiscernible if $\phi \in L$) sequence $(a_i)_{i<\omega}$ of points of $\pred$ and $y$ such that $\phi(a_i,y) \Leftrightarrow i$ is even. If this is the case, then Proposition \ref{HonestLemma} applies and in particular there is an honest definition of $\phi(x,a)$ over $\pred$ for all $a$.

We say that $T$ (or $T_\pred$) is $NIP$ over $\pred$ if every $L$ (resp. $L_\pred$) formula is.

Given a small subset of the monster $A$ and a set of formulas $\Omega$ (possibly with parameters) we let $A_{ind(\Omega)}$ be the structure with domain $A$ and a relation added for every set of the form $A^n \cap \phi(\bar{x})$, where $\phi(\bar{x}) \in \Omega$.

Notice that $A_{ind(L^{bdd}_{\pred})}$ eliminates quantifiers, while $A_{ind(L)}$ not necessarily does. However $A_{ind(L^{bdd}_{\pred})}$ and $A_{ind(L)}$ are bi-interpretable.
\\
 
\begin{lemme}
\label{HonestDefPred} Assume that $\varphi(xy,c)\in L_{\pred}$
has an honest definition $\vartheta(xy,d)\in L_{\pred}$ over $A$. Then
$\theta(x,d)=(\exists y\in \pred)\vartheta(xy,d)$ is an honest definition
of $\phi(x,c)=(\exists y\in \pred)\varphi(xy,c)$ over $A$.\end{lemme}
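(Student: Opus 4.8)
The plan is to check the two clauses in the definition of an honest definition over $A$ directly for $\theta(x,d)$, namely the non-lying implication $\theta(x,d)\impl^{\pred(x)}\phi(x,c)$ and the trace equality $\theta(A,d)=\phi(A,c)$. I would fix a monster model of $T_\pred$ and write $(\mathbb M,\pred(\mathbb M))\succ(M,A)$, so that $A=\pred(M)=\pred(\mathbb M)\cap M$ and the parameters $c$ lie in $M$; the parameter $d$ of the given honest definition lies in $\pred(\mathbb M)$. The whole point of the argument will be the interplay between the two "levels" $A$ and $\pred(\mathbb M)$: the trace clause of $\vartheta$ controls $\varphi$ only on tuples from $A$, whereas its non-lying clause controls $\varphi$ on all of $\pred(\mathbb M)$.

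The implication clause is the routine half. Given $\bar a\in\pred(\mathbb M)^{|x|}$ with $\theta(\bar a,d)$, I obtain $\bar b\in\pred(\mathbb M)^{|y|}$ with $\vartheta(\bar a\bar b,d)$; since $\vartheta$ never lies on $\pred$, i.e. $(\forall xy\in\pred)(\vartheta(xy,d)\impl\varphi(xy,c))$, this forces $\varphi(\bar a\bar b,c)$ and hence $\phi(\bar a,c)$. This establishes $\theta(x,d)\impl^{\pred(x)}\phi(x,c)$, and restricting to $A\subseteq\pred(\mathbb M)$ immediately gives the inclusion $\theta(A,d)\subseteq\phi(A,c)$. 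No elementarity is needed here, only the non-lying property of $\vartheta$ pushed through the bounded existential quantifier.

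The reverse inclusion $\phi(A,c)\subseteq\theta(A,d)$ is the crux, and here is where the hard part lies. Fix $\bar a\in A^{|x|}$ with $\phi(\bar a,c)$, that is $(\mathbb M,\pred(\mathbb M))\models(\exists y\in\pred)\varphi(\bar a y,c)$. The difficulty is that the existential witness a priori lives in $\pred(\mathbb M)^{|y|}$, possibly outside $A$, and then the trace clause of $\vartheta$ says nothing about it. The key step I would use to get around this is elementarity of the pair: since $\bar a$ and $c$ both lie in $M$ and $(M,A)\prec(\mathbb M,\pred(\mathbb M))$, the sentence $(\exists y\in\pred)\varphi(\bar a y,c)$ reflects down to $(M,A)$, producing a witness $\bar b\in\pred(M)=A$. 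Now $(\bar a,\bar b)\in A^{|xy|}$, so the trace clause of the honest definition $\vartheta$ applies and yields $\vartheta(\bar a\bar b,d)$ from $\varphi(\bar a\bar b,c)$; as $\bar b\in A\subseteq\pred$, this witnesses $\theta(\bar a,d)=(\exists y\in\pred)\vartheta(\bar a y,d)$, so $\bar a\in\theta(A,d)$.

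The main obstacle is exactly this pulling of the witness into $A$: a witness in $\pred(\mathbb M)\setminus A$ would be useless, since the trace of $\vartheta$ agrees with $\varphi$ only on $A$, and one can see that without reflection the statement genuinely fails. It is the combination of elementarity of the pair with the hypothesis that the parameters $\bar a,c$ lie in $M$ that forces a witness inside $A=\pred(M)$ and saves the argument. Putting the two inclusions together with the implication clause of the second paragraph shows that $\theta(x,d)$ satisfies both requirements, i.e. is an honest definition of $\phi(x,c)$ over $A$.
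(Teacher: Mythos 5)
Your proof is correct and follows essentially the same route as the paper: the non-lying clause of $\vartheta$ pushed through the bounded existential gives $\theta(x,d)\impl^{\pred(x)}\phi(x,c)$ and one inclusion of the trace equality, while the reverse inclusion uses elementarity of the pair to pull the existential witness for $\phi(\bar a,c)$ down into $A$, where the trace clause of $\vartheta$ applies. The only difference is that you make explicit the reflection step that the paper leaves implicit in the phrase ``$\varphi(ab,c)$ for some $b\in A$''.
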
 
\begin{proof}
For $a \in \pred$, $\theta(a,d)$  $\Rightarrow$   $\vartheta(ab,d)$ for some $b \in \pred$ $\Rightarrow$  $\varphi(ab,c)$ (as $\vartheta(xy,d)$ is honest and $ab \in \pred$) $\Rightarrow$ $\phi(a,c)$. 

For $a \in A$, $\phi(a,c)$   $\Rightarrow$  $\varphi(ab,c)$ for some $b \in A$   $\Rightarrow$  $\vartheta(ab,d)$ (as $\vartheta(A,d)=\varphi(A,c)$)   $\Rightarrow$  $\theta(a,d)$. 
\end{proof}
 
We will be using $\lambda$-big models (see \cite[10.1]{Hod}). We will only use that if $N$ is $\lambda$-big, then it is $ \lambda$-saturated and strongly $\lambda$-homogeneous (that is, for every $\bar{a},\bar{b} \in N^{<\lambda}$ such that $(N,\bar{a}) \equiv (N, \bar{b})$ there is an automorphism of $N$ taking $\bar{a}$ to $\bar{b}$) (see  \cite[10.1.2 + Exercise 10.1.4]{Hod}). Every model $M$ has a  $\lambda$-big elementary extension $N$.  
  
\begin{lemme}
\label{IndSeq} 1) If $N\succeq M$, $M$ is $\omega$-big, $N$ is $|M|^+$-big, and $a,b\in M^{<\omega}$ then $tp_L(a)=tp_L(b) \Leftrightarrow tp_{L_\pred}(a)=tp_{L_\pred}(b)$ in the sense of the pair $(N,M)$.

2) Let $\phi(x,y)\in L_{\pred}$, $(M,A)$  $\omega$-big,
$(a_{i})_{i<\omega}\in M^{\omega}$ be $L_{\pred}$-indiscernible, and
let $\theta(x,d_{0})$ be an honest definition for $\phi(x,a_{0})$ over
$A$ (where $d_0$ is in $\pred$ of the monster model). Then we can find an $L_{\pred}$-indiscernible sequence $(d_{i})_{i<\omega}\in \pred^{\omega}$
such that $\theta(x,d_{i})$ is an honest definition for $\phi(x,a_{i})$
over $A$.\end{lemme}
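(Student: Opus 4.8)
The plan is to prove both parts by exploiting the strong homogeneity provided by bigness, which lets us transport honest definitions along automorphisms that realize equalities of types.

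\medskip

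For part 1), the implication from $L_\pred$-equivalence to $L$-equivalence is immediate since $L\subseteq L_\pred$. For the converse, suppose $\tp_L(a)=\tp_L(b)$ with $a,b\in M^{<\omega}$; I want to show $(N,M)\models \tp_{L_\pred}(a)=\tp_{L_\pred}(b)$. Since $M$ is $\omega$-big, hence strongly $\omega$-homogeneous as an $L$-structure, and $\tp_L(a)=\tp_L(b)$, there is an $L$-automorphism $\sigma$ of $M$ with $\sigma(a)=b$. The key point is that such $\sigma$ automatically preserves the predicate $\pred$, because $\pred(M)=M$ in the pair $(N,M)$ — the predicate names exactly the small model $M$, and every element of $M$ satisfies $\pred$. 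Thus $\sigma$ is in fact an automorphism of the $L_\pred$-structure $M$ viewed inside the pair, so it preserves all $L_\pred$-formulas with parameters coming from $M$; since bounded $L_\pred$-formulas evaluated at tuples from $M$ are determined by the induced structure on $M$, we get $(N,M)\models \psi(a)\leftrightarrow\psi(b)$ for every $L_\pred$-formula $\psi$. (Here one uses that $(N,M)$ is bounded-definable over $M_{\mathrm{ind}}$, or more directly that truth of an $L_\pred$-formula at a tuple of $M$ only depends on the $L_\pred$-structure $M$ induces on itself, which $\sigma$ fixes.) The main subtlety is checking that quantifiers ranging over $\pred$ in $N$, rather than over all of $N$, are still governed by $M$; this is where $N$ being $|M|^+$-big is used, to guarantee the pair is saturated enough that the induced theory on $M$ is correctly reflected.

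\medskip

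For part 2), I would construct the $L_\pred$-indiscernible sequence $(d_i)_{i<\omega}$ by a back-and-forth/compactness argument built on part 1). The given data is an $L_\pred$-indiscernible sequence $(a_i)_{i<\omega}$ in $M$ and an honest definition $\theta(x,d_0)$ of $\phi(x,a_0)$ over $A$, with $d_0\in\pred(\mathbb M)$. The idea is that the property ``$\theta(x,d_i)$ is an honest definition of $\phi(x,a_i)$ over $A$'' is expressible (approximately, by finite pieces) in the pair, and since the $a_i$ all have the same $L_\pred$-type over the relevant parameters, I can find matching $d_i$ realizing the corresponding type. More precisely, for each $i$, since $(a_i)$ is $L_\pred$-indiscernible, there is an automorphism of the monster pair taking $a_0$ to $a_i$ (fixing $A$ setwise, or at least fixing enough); applying it to $d_0$ produces some $d_i'\in\pred$ for which $\theta(x,d_i')$ is an honest definition of $\phi(x,a_i)$ over $A$. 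This yields honest definitions for each $a_i$ individually, but not yet an indiscernible sequence of parameters. To upgrade to $L_\pred$-indiscernibility of $(d_i)$, I would set up the type of the sequence $(a_i d_i)_{i<\omega}$ and extract an indiscernible sequence via Ramsey/Erd\H os--Rado in the standard way, checking that the extracted sequence still has each $d_i$ serving as an honest definition for the corresponding $\phi(x,a_i)$ — this last point follows because ``being an honest definition over $A$'' is a conjunction of the condition $\theta(A,d_i)=\phi(A,a_i)$ and the implication $\theta(x,d_i)\rightarrow^{\pred}\phi(x,a_i)$, both of which are preserved under the type-equality guaranteed by part 1).

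\medskip

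The hard part will be part 2), specifically ensuring that the extraction of an indiscernible sequence does not destroy the honest-definition property. The honest-definition condition involves the set $A=\pred$ essentially, so I must track $A$ carefully through the Ramsey argument; the cleanest route is probably to realize the whole configuration inside a sufficiently big pair and argue that the locally-consistent requirements on $(a_i,d_i)$ are captured by a single type that part 1) shows is invariant under the relevant automorphisms, then apply standard indiscernible-sequence extraction to that type. I expect the bookkeeping of which parameters $A$ versus $M$ the various formulas are honest ``over'' to be the principal source of friction rather than any deep new idea.
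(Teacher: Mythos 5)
Your part 1) has a genuine gap at its central step. You claim that an $L$-automorphism $\sigma$ of $M$ with $\sigma(a)=b$ preserves all $L_\pred$-formulas because it preserves $\pred$, and that ``truth of an $L_\pred$-formula at a tuple of $M$ only depends on the $L_\pred$-structure $M$ induces on itself''. This is false in general: an $L_\pred$-formula evaluated in the pair $(N,M)$ has unrestricted quantifiers ranging over all of $N$, so its truth at $a\in M$ is not determined by the induced structure on $M$ (that it would be is essentially a boundedness statement, which is not a hypothesis of the lemma and is the sort of conclusion Section 2 is working towards). The missing idea --- and the actual role of $N$ being $|M|^+$-big --- is to \emph{extend} $\sigma$ to an $L$-automorphism $\sigma'$ of all of $N$ with $\sigma'(M)=M$, which strong $|M|^+$-homogeneity provides because $\sigma$ is an elementary map between two enumerations of $M$ inside $N$. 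Such a $\sigma'$ preserves $L$-formulas on $N$ and the predicate $\pred$, hence is an automorphism of the pair $(N,M)$, and only then does $\tp_{L_\pred}(a)=\tp_{L_\pred}(b)$ follow. Your appeal to bigness (``the pair is saturated enough that the induced theory on $M$ is correctly reflected'') does not supply this extension step.

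Part 2) follows the same outline as the paper (transport $d_0$ along automorphisms, then Ramsey plus compactness to make $(a_id_i)$ indiscernible while keeping the $a_i$'s), but it inherits the gap: you need an automorphism of a large model moving $a_0$ to $a_i$ while fixing \emph{both} $M$ and $A$ setwise, since honesty over $A$ refers to the small set $A$ sitting inside a bigger pair $(N,B)\succeq(M,A)$ where $d_0$ lives. The paper manufactures exactly this by applying part 1) to the ``pair of pairs'' $((N,B),(M,A))$ in the language $L_{\pred,\pred'}$ with $\pred'$ naming $M$: the $L_\pred$-indiscernibility of $(a_i)$ in $(M,A)$ upgrades to $L_{\pred,\pred'}$-indiscernibility, and ``$\theta(x,d_0)$ is an honest definition of $\phi(x,a_0)$ over $A$'' is a single $L_{\pred,\pred'}$-formula about $(a_0,d_0)$, so matching $d_i$'s exist for every $i$. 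Without naming that device (or an equivalent), your ``fixing $A$ setwise, or at least fixing enough'' remains unjustified.
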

\begin{proof}
1) We consider here the pair $(N,M)$ as an $L_{\pred}$-structure, where $\pred(x)$ is a new predicate interpreted in the usual way. Let $\sigma\in Aut_{L}(M)$ be such that $\sigma(a)=b$. As $N$ is big, it extends to $\sigma'\in Aut_{L}(N)$, with $\sigma'(M)=M$.
But then actually $\sigma'\in Aut_{L_{\pred}}(N)$ (since it preserves all
$L$-formulas and $\pred$).

2) Let $(N,B)\succeq(M,A)$ be  $|M|^+$-big. We consider the pair of pairs $Th((N,B),(M,A))$ in the language $L_{\pred,\pred'}$,
with $\pred'(N)=M$. By 1) the sequence $(a_{i})_{i<\omega}$ is $L_{\pred,\pred'}$-indiscernible.
The fact that $\theta(x,d_0)$ is an honest definition of $\phi(x,a_0)$ over $A$ is expressible by the formula $$(d_{0}\in \pred)\land  ((\forall x\in \pred'\cap \pred)\,\theta(x,d_{0})\equiv\phi(x,a_{0}))\land((\forall x\in \pred)\theta(x,d_{0})\rightarrow\phi(x,a_{0})).$$
By $L_{\pred,\pred'}$-indiscernibility, for each $i$, we can find $d_{i}$ such that the same formula holds of $(a_i,d_i)$. Then using Ramsey, for any finite $\Delta \subset L_P$, we can find an infinite subsequence $(a_i,d_i)_{i\in I}$, $I\subseteq \omega$ that is $\Delta$-indiscernible. As $(a_i)$ is indiscernible, we can assume $I=\omega$. Then by compactness, we can find the $d_i$'s as required.
\end{proof}

We will need the following technical lemma.

\begin{lemme}
\label{mainLemma} Let $(M,A)\models T_{\pred}$ be $\omega$-big and assume that
$A_{ind(L_{\pred})}$ is $NIP$.

Let $(a_{i})_{i<\omega}\in M^{\omega}$ be $L_{\pred}$-indiscernible,
$(b_{2i})_{i<\omega}\in A^\omega$ and $\Delta((x_{i})_{i<n};(y_{i})_{i<n})\in L_{\pred}$
be such that $\Delta((x_{i})_{i<n};(a_{i})_{i<n})$ has an honest definition
over $A$ by an $L_{\pred}$-formula, and $\models\Delta(b_{2i_{0}},...,b_{2i_{n-1}};a_{2i_{0}},...,a_{2i_{n-1}})$
for any $i_{0},...,i_{n-1}<\omega$.

Then there are $i_{0},...,i_{n-1}\in\omega$ with $i_{j}\equiv j\,(mod\,2)$
and $(b_{i_{j}})_{j\equiv1(mod2),<n}\in \pred$ such that $\models\Delta(b_{i_{0}},...,b_{i_{n-1}};a_{i_{0}},...,a_{i_{n-1}})$.\end{lemme}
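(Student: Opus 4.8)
The plan is to reduce the statement to finding $\pred$-witnesses for an honest definition, and then to transfer the given ``all even'' solutions to an alternating configuration by a $NIP$-alternation argument carried out inside the induced structure on $\pred$.

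First I would fix an honest definition $\vartheta((x_i)_{i<n},d)\in L_{\pred}$, $d\in\pred$, of $\Delta((x_i)_{i<n};(a_i)_{i<n})$ over $A$; recall this means $\vartheta(A^n,d)=\Delta(A^n;(a_i)_{i<n})$ together with $\models(\forall \bar x\in\pred)\,\vartheta(\bar x,d)\impl\Delta(\bar x;(a_i)_{i<n})$. The crucial point is honesty: any tuple $\bar b\in\pred^n$ with $\models\vartheta(\bar b,d)$ automatically satisfies $\Delta$. So it suffices to produce the sought alternating tuple, with even coordinates equal to the prescribed $b_{2\cdot}\in A$ and odd coordinates in $\pred$, as a $\pred$-witness of a transported $\vartheta$. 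Since any increasing selection of indices has the same $L_{\pred}$-type as $(a_0,\dots,a_{n-1})$, I can transport $\vartheta$ by an automorphism coming from $\omega$-bigness (as in Lemma \ref{IndSeq}(1)), and, using Lemma \ref{IndSeq}(2), arrange the honest-definition parameters along the sequence into an $L_{\pred}$-indiscernible sequence of elements of $\pred$. I would also note that $A_{ind(L_{\pred})}$ being $NIP$ passes, by elementarity of the pair extension, to the induced structure on $\pred(\mathbb M)$, so that $NIP$-alternation bounds are available for sequences and relations living in $\pred$.

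Next I would flip the odd coordinates one at a time from their ``even'' selection $a_{2k}$ to an ``odd'' selection $a_{2k+1}$, keeping the even coordinates equal to the prescribed $b$'s and the previously found coordinates fixed as parameters. For a single flip of coordinate $j$ I freeze all other slots and consider, with the honest-definition parameter varying along the $L_{\pred}$-indiscernible sequence produced above, the relation $S(d):=(\exists y\in\pred)\,\vartheta(\dots,y,\dots;d)$, which is the trace on $\pred$ of an $L_{\pred}$-formula with $\pred$-parameters and hence a relation of the induced structure on $\pred$. By the ``all even'' hypothesis together with the prescribed $b$'s, $S$ holds at every even stage of the sequence; by $NIP$ its truth value alternates only finitely often, so $S$ is eventually constantly true, and in particular holds at an odd stage. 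This yields an odd index for coordinate $j$ and a witness in $\pred$; I then freeze that witness and recurse on the remaining odd coordinates, the recursion being an induction on the number of coordinates still to be flipped. Once all odd coordinates carry $\pred$-witnesses, honesty of $\vartheta$ upgrades $\vartheta$-satisfaction to $\Delta$-satisfaction, producing the required $i_0,\dots,i_{n-1}$ with $i_j\equiv j\,(\mathrm{mod}\,2)$.

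The step I expect to be the main obstacle is the bookkeeping that makes the $NIP$ argument apply to a \emph{single} formula evaluated against a \emph{single} indiscernible sequence: one must choose the pairing of the sequence, the transported honest-definition parameters, and the frozen coordinates so that flipping coordinate $j$ is literally the evaluation of one $L_{\pred}$-relation along one indiscernible sequence in $\pred$, so that finiteness of alternations is available and forces a positive instance at an odd stage. A secondary subtlety is the compatibility of the witnesses found for successive coordinates — one must arrange that fixing an earlier witness as a parameter does not destroy the positive instances needed for later flips — which is exactly where $\omega$-bigness and the coherence of the honest definitions along the sequence (Lemma \ref{IndSeq}(2)) are used.
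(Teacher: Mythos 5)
Your main ingredients are the right ones and match the paper's: take an honest definition, transport its parameters along the indiscernible sequence into an $L_{\pred}$-indiscernible sequence $(d_i)$ inside $\pred$ via Lemma \ref{IndSeq}, and apply the alternation bound coming from $NIP$ of $A_{ind(L_{\pred})}$ to a relation of the induced structure on $\pred$. Your first ``flip'' is also sound: with all other slots frozen at even indices and prescribed $b$'s (all in $A$), the trace condition of honesty gives positivity of $S(d_i)=(\exists y\in\pred)\vartheta(\dots,y,\dots;d_i)$ at every even stage, and the alternation bound forces a positive odd stage.

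The gap is in the recursion, and it is not the bookkeeping issue you describe but a genuine loss of hypotheses. The alternation argument needs positivity at cofinally many stages of the sequence along which you are flipping. For coordinate $1$ this is supplied by the ``all even selections'' hypothesis. But after you freeze the witness $c_1\in\pred$ for coordinate $1$ at its new odd index, the sequence relevant to flipping coordinate $3$ is the one where coordinate $3$'s index varies with coordinates $0,1,2,4,\dots$ fixed --- and for that sequence you know positivity at exactly \emph{one} stage (the value coordinate $3$ happened to have during flip $1$), not at all even stages. The hypothesis only covers all-even selections with all witnesses the prescribed elements of $A$; it says nothing about mixed configurations involving $c_1$ and an odd $a$-index in slot $1$. ``Eventually constantly true'' from flip $1$ gives you infinitely many positive stages for the \emph{wrong} sequence (varying coordinate $1$), so it cannot be recycled. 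The fix is precisely the paper's restructuring: existentially quantify out \emph{all} odd $x$-slots at once over $\pred$, note via Lemma \ref{HonestDefPred} that the resulting formula $\Delta'$ still has an honest definition $\theta((x_{2i})_{2i<n},d)$ over $A$, and run a \emph{single} alternation argument for $\theta((b_{i_{2j}})_{2j<n};d_i)$ along the interleaved sequence $\bar a_i=a_{i_0}a_{i_0+i}a_{i_2}a_{i_2+i}\cdots$ (with gaps $i_{j+2}-i_j>alt(\theta)$ so that this is genuinely indiscernible). Even stages of that one sequence are all-even selections, so the hypothesis applies, and a single positive odd stage $i'$ simultaneously delivers all the odd indices $i_0+i',i_2+i',\dots$ and, by honesty of $\theta$, all the $\pred$-witnesses at once. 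Your one-at-a-time scheme cannot be completed without collapsing into this all-at-once argument.
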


\begin{proof} To simplify notation assume that $n$ is even.
Let $$\Delta'((x_{2i})_{2i<n};(y_{i})_{i<n})=(\exists x_{1}x_{3}...x_{n-1}\in \pred)\,\Delta((x_{i})_{i<n};(y_{i})_{i<n}).$$
By assumption and Lemma \ref{HonestDefPred}
$\Delta'((x_{2i})_{2i<n};(a_{i})_{i<n})$ has an honest definition over
$A$ by some $L_{\pred}$-formula, say $\theta((x_{2i})_{2i<n},d)$ with
$d\in \pred$. Since $A_{ind(L_{\pred})}$ is $NIP$, let $N=alt(\theta)$
inside $\pred$.

Choose even $i_{0},i_{2},..., i_{n-2} \in \omega$ such that $i_{j+2}-i_{j}>N$
and consider the sequence $(\bar{a}_{i})_{0<i<N}$ with $\bar{a}_{i}=a_{i_{0}}a_{i_{0}+i}a_{i_{2}}a_{i_{2}+i}...a_{i_{n-2}}a_{i_{n-2}+i}$.
It is $L_{\pred}$-indiscernible (and extends to an infinite $L_{\pred}$-indiscernible sequence). By Lemma \ref{IndSeq}
we can find an $L_{\pred}$-indiscernible sequence $(d_{i})_{i<N}$, $d_i\in \pred$
such that $\theta((x_{2i})_{2i<n};d_{i})$ is an honest definition for
$\Delta'((x_{2i})_{2i<n};\bar{a}_{i})$. By assumption $\theta((b_{i_{2j}})_{2j<n};d_{i})$ holds
for all even $i<N$. But then since $N=alt(\theta)$ inside $\pred$, it must hold for some odd $i'<N$. By honesty this implies
that $\Delta'((b_{i_{2j}})_{2j<n};\bar{a}_{i'})$ holds, and decoding
we find some $(b_{i_{2j}+i'})_{2j<n}\in \pred^{\frac{n}{2}}$ as wanted.\end{proof}

Now the main results of this section.

\begin{thm}\label{depPair1}
Assume $T$ is $NIP$ and $T_{\pred}$ is $NIP$ over $\pred$. Then every bounded formula
is $NIP$.\end{thm}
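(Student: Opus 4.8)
The plan is to induct on the number of bounded ($\pred$-)quantifiers, reducing to a single existential quantifier and then applying Lemma \ref{mainLemma}. The base case is a formula $\phi(x,\bar y)\in L$: since $T$ is $NIP$ and every $L_\pred$-indiscernible sequence is in particular $L$-indiscernible, $\phi$ has finite alternation number and so is $NIP$ in the pair. For the inductive step I write a bounded formula as $Qz\in\pred\,\Psi$ with $\Psi$ bounded and one quantifier shorter; as $NIP$ is closed under negation and $\neg(Qz\in\pred\,\Psi)$ is again of the form $\exists z\in\pred\,\chi$ with $\chi$ bounded and $NIP$ exactly when $\Psi$ is, I may assume the outer quantifier is $\exists$. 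So it suffices to show: if $\Psi(x,z,\bar y)\in L_\pred$ is bounded and $NIP$ for every partition of its free variables, then $\Phi(x,\bar y)=\exists z\in\pred\,\Psi(x,z,\bar y)$ is $NIP$. Carrying the hypothesis in the ``all partitions'' form is what later lets me treat $z$ as the object variable of $\Psi$.

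For the main step I argue by contradiction, so suppose $alt(\Phi)$ is infinite. Working inside an $\omega$-big pair $(M,A)$, I obtain an $L_\pred$-indiscernible sequence $(a_i)_{i<\omega}$ and a parameter $c$ with $\Phi(a_i,c)$ holding exactly at the even $i$; since $\Phi$ is existential over $\pred$, at each even position I choose a witness $b_{2i}\in\pred$ with $\Psi(a_{2i},b_{2i},c)$, while no $\pred$-witness exists at the odd positions. I then encode this by $\Delta((x_j)_{j<n};(y_j)_{j<n})=\bigwedge_{j<n}\Psi(y_j,x_j,c)$, one witness variable $x_j$ per position, so that $\Delta$ holds on every purely even tuple with the chosen $b_{2i}$. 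The key verification needed to feed Lemma \ref{mainLemma} is that $\Delta((x_j)_{j<n};(a_j)_{j<n})$ has an honest definition over $A$ by an $L_\pred$-formula. This follows from Proposition \ref{HonestLemma} applied with $L'=L_\pred$ and $p=\{\pred\}$: each conjunct $\Psi(a_j,x_j,c)$ is $NIP$ over $\pred$ (here I use the induction hypothesis with $z$ as object variable), and $NIP$ over $\pred$ is preserved under Boolean combinations in disjoint variable blocks, since an $L_\pred$-indiscernible sequence of $\pred$-tuples restricts to one on each block. The standing assumption that $T_\pred$ is $NIP$ over $\pred$ is precisely the hypothesis ``$A_{ind(L_\pred)}$ is $NIP$'' required by the lemma.

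Lemma \ref{mainLemma} then produces indices $i_0,\dots,i_{n-1}$ with $i_j\equiv j\pmod 2$ and $\pred$-witnesses at the odd positions for which $\Delta$ holds; already $n=2$ suffices. Reading off the odd coordinate gives an odd index $i_1$ and a $b\in\pred$ with $\Psi(a_{i_1},b,c)$, hence $\Phi(a_{i_1},c)$ --- contradicting the failure of $\Phi$ at odd positions. This bounds $alt(\Phi)$ and closes the induction.

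The step I expect to be genuinely delicate is the interface with Lemma \ref{mainLemma}: casting a hypothetical $IP$-configuration for $\Phi$ into the exact even/odd witnessing format and, above all, tracking the external parameter $c$ as it passes through the honest-definition machinery and Lemma \ref{IndSeq} (the alternating sequence $(a_i)$ cannot be taken indiscernible over $c$, so $c$ must be handled entirely inside the honest definition of $\Delta$). This is the crux because the existential $\pred$-quantifier is exactly what can manufacture $IP$ out of $NIP$ data; Lemma \ref{mainLemma} tames it by replacing the $\pred$-witnesses with an honest definition $\theta$ and trading the external alternation of $\Phi$ for the internal alternation $alt(\theta)$ inside $\pred$, which is finite precisely because $A_{ind(L_\pred)}$ is $NIP$. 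All of the quantitative content of the theorem is thus concentrated in that single application.
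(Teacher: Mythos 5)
Your setup (induction on bounded quantifiers, reduction to a single existential $\pred$-quantifier, extraction of an alternating $L_\pred$-indiscernible sequence $(a_i)$ with witnesses $b_{2i}\in\pred$ at even positions, and the appeal to Lemma \ref{mainLemma}) matches the paper. But your choice of $\Delta((x_j)_{j<n};(y_j)_{j<n})=\bigwedge_{j<n}\Psi(y_j,x_j,c)$ is exactly the move that Lemma \ref{mainLemma} cannot absorb, and you half-diagnose this yourself: the lemma requires $\Delta\in L_\pred$, i.e.\ a formula whose only free slots are the $x_j$'s and the $y_j$'s to be filled by the $a_i$'s, because its proof transports the honest definition $\theta(\bar x,d_0)$ of $\Delta'(\bar x;\bar a_0)$ along the sequence $(\bar a_i)$ via Lemma \ref{IndSeq}, using an automorphism of the big pair that moves $\bar a_0$ to $\bar a_i$. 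If $c$ sits inside $\Delta$ as a parameter, that automorphism moves $c$ as well (the sequence $(a_i)$ is by construction \emph{not} indiscernible over $c$, since $\phi(x,c)$ alternates on it), so what you obtain is an honest definition of $\bigwedge\Psi(a_{i_j},x_j,\sigma(c))$, not of the formula you need. There is no slot in Lemma \ref{mainLemma} for an external parameter over which the sequence fails to be indiscernible, and appending $c$ to each $a_i$ does not help for the same reason. The fact that your argument would close with $n=2$ and never needs $alt(\psi)$ is a symptom that the existential quantifier has not really been paid for.

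The paper's resolution is to keep $c$ entirely outside of Lemma \ref{mainLemma}: it takes $\Delta_\delta((x_i)_{i<n};(y_i)_{i<n})$ to be the \emph{parameter-free} $L_\pred$-formula asserting that $(x_iy_i)_{i<n}$ is $\delta$-indiscernible. The hypothesis of the lemma holds because the witnesses can be chosen so that $(a_{2i}b_{2i})$ is $L_\pred$-indiscernible, and the conclusion produces indices $i_j\equiv j\pmod 2$ and elements $b_{i_j}\in\pred$ at the odd positions such that $(a_{i_k}b_{i_k})_{k<n}$ is $\delta$-indiscernible. Only \emph{then} does $c$ re-enter: since no $\pred$-witness exists at odd positions, $\psi(a_{i_k}b_{i_k},c)$ holds iff $k$ is even, so for $n$ and $\delta$ large enough this $\delta$-indiscernible sequence contradicts NIP of the inner formula $\psi$ --- not the failure of $\Phi$ at odd indices directly. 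You should replace your $\Delta$ by $\Delta_\delta$ and rework the endgame accordingly; the rest of your outline (the induction, the witness extraction, and the observation that NIP over $\pred$ yields the required honest definitions) is sound.
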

\begin{proof}
We prove this by induction on adding an existential bounded quantifier (since $NIP$
formulas are preserved by boolean operations). So assume that $\phi(x,y)=(\exists z\in \pred)\,\psi(xz,y)$
has $IP$, where $\psi(xz,y)\in L_{\pred}^{bdd}$ is $NIP$. Then there
is an $\omega$-big $(M,A)\models T_{\pred}$ and an $L_{\pred}$-indiscernible
sequence $(a_{i})_{i<\omega}\in M^{\omega}$ and $c\in M$ such that
$\phi(a_{i},c) \Leftrightarrow i=0 (\text{mod }2)$. Then we can assume
that there are $b_{2i}\in A$ such that $(a_{2i}b_{2i})$ is $L_{\pred}$-indiscernible
and $\models\psi(a_{2i}b_{2i},c)$. 

Notice that from $T_{\pred}$ being $NIP$ over $\pred$ it follows that $A_{ind(L_{\pred})}$
is $NIP$ and that every $L_{\pred}$-formula has an honest definition over
$A$. For $\delta\in L_{\pred}$ take $\Delta_{\delta}((x_{i})_{i<n};(y_{i})_{i<n})$
to be an $L_{\pred}$-formula saying that $(x_{i}y_{i})_{i<n}$ is $\delta$-indiscernible.
Applying Lemma \ref{mainLemma}, we obtain $i_{0},...,i_{n}\in\omega$ with $i_{j}\equiv j\,(\text{mod }\,2)$
and $(b_{i_{j}})_{j\equiv1(\text{mod }2),<n}\in \pred$ such that $(a_{i_k}b_{i_k} )_{k<n}$ is $\delta$-indiscernible. Since $\models \neg(\exists z\in \pred)\psi(a_{2i+1}z,c)$ for all $i$, we see that $\psi(a_{i_k}b_{i_k},c)$ holds if and only if $k$ is even. Taking $n$ and $\delta$ large enough, this contradicts dependence of $\psi(xz,y)$.
\end{proof}

\begin{cor}\label{depPair2}
Assume $T$ is $NIP$, $A_{ind(L)}$ is $NIP$
and $T_{\pred}$ is bounded. Then $T_{\pred}$ is $NIP$.
\end{cor}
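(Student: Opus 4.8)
The plan is to deduce this from Theorem \ref{depPair1}. Since $T_{\pred}$ is bounded, every $L_{\pred}$-formula is equivalent to a bounded one, so once we know that every bounded formula is $NIP$ it follows that $T_{\pred}$ is $NIP$. By Theorem \ref{depPair1} it is therefore enough to verify its hypothesis, namely that $T_{\pred}$ is $NIP$ over $\pred$. All the work lies in extracting this from the assumption that $A_{ind(L)}$ is $NIP$, and the central difficulty is that the definition of $NIP$ over $\pred$ allows an \emph{external} parameter $y$, whereas the induced structure $A_{ind(L_{\pred})}$ only sees parameters from $\pred$.

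The first step is to record that $A_{ind(L_{\pred})}$ is $NIP$. Indeed $A_{ind(L^{bdd}_{\pred})}$ is bi-interpretable with $A_{ind(L)}$, and $NIP$ is inherited under interpretation, so $A_{ind(L^{bdd}_{\pred})}$ is $NIP$; by boundedness every $L_{\pred}$-formula is equivalent to a bounded one, so $A_{ind(L_{\pred})}$ carries the same relations as $A_{ind(L^{bdd}_{\pred})}$ and is $NIP$ too. The second step is to show that every $L_{\pred}$-formula is $NIP$ over $\pred$, which I would do by induction along the construction of bounded formulas out of $L$-formulas (this suffices, by boundedness). The class of formulas that are $NIP$ over $\pred$ is closed under Boolean combinations (the usual alternation bound, now applied only to indiscernible sequences of points of $\pred$), and its base case contains every $L$-formula, since an $L$-indiscernible sequence cannot witness alternation of an $L$-formula because $T$ is $NIP$. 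It then remains to pass the property through a single bounded existential quantifier $(\exists y\in\pred)$; universal quantifiers follow via $\forall=\neg\exists\neg$ together with closure under negation, which sidesteps the known asymmetry of honest definitions.

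The heart of the argument, and the main obstacle, is this existential step. Suppose $\varphi(xy,z)$ is $NIP$ over $\pred$ but $\phi(x,z)=(\exists y\in\pred)\,\varphi(xy,z)$ is not; realising an $IP$ pattern inside an $\omega$-big $(M,A)\models T_{\pred}$, I obtain an $L_{\pred}$-indiscernible $(a_i)_{i<\omega}\in A^{\omega}$ and $c\in M$ with $\phi(a_i,c)$ alternating. Because $\varphi$ is $NIP$ over $\pred$, Proposition \ref{HonestLemma} yields an honest definition $\vartheta(xy,d)$ of $\varphi(xy,c)$ over $A$ with $d\in\pred(\mathbb M)$, and by Lemma \ref{HonestDefPred} the formula $\theta(x,d)=(\exists y\in\pred)\,\vartheta(xy,d)$ is an honest definition of $\phi(x,c)$ over $A$. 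In particular $\theta(a_i,d)\Leftrightarrow\phi(a_i,c)$ for each $i$ (as $a_i\in A$), so $\theta(a_i,d)$ alternates. Now $\theta(x,y)\in L_{\pred}$ and $d$ is a point of $\pred$ in an elementary extension of the pair, so $\theta$ defines a relation of $A_{ind(L_{\pred})}$; since $(a_i)$, being $L_{\pred}$-indiscernible, is $A_{ind(L_{\pred})}$-indiscernible, this exhibits $IP$ in $A_{ind(L_{\pred})}$, contradicting the first step. Thus $\phi$ is $NIP$ over $\pred$, the induction closes, and $T_{\pred}$ is $NIP$ over $\pred$; Theorem \ref{depPair1} and boundedness then give that $T_{\pred}$ is $NIP$. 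The essential idea I would stress is that honest definitions convert the external parameter $c$ into a parameter $d$ taken from $\pred$, thereby moving the entire question inside the induced structure $A_{ind(L_{\pred})}$, where $NIP$ is already available.
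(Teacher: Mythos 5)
Your proof is correct, and its skeleton matches the paper's: both first deduce that $A_{ind(L_{\pred})}$ is $NIP$ from the bi-interpretability of $A_{ind(L^{bdd}_{\pred})}$ with $A_{ind(L)}$ plus boundedness, both then establish the intermediate statement that $T_{\pred}$ is $NIP$ over $\pred$, and both finish by invoking Theorem \ref{depPair1} together with boundedness. Where you genuinely diverge is in the middle step. The paper says to ``use the same proof as in \ref{depPair1}'', i.e.\ to rerun the induction through Lemma \ref{mainLemma}, whose role is to manufacture witnesses in $\pred$ for the odd-indexed terms of the alternating sequence; the honest definitions it needs are available because formulas with parameters in $\pred$ are automatically $NIP$ over $\pred$ once $A_{ind(L_{\pred})}$ is $NIP$. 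You instead observe that when the target is only ``$NIP$ over $\pred$'', the alternating sequence $(a_i)$ already lies in $A$, so one can apply Lemma \ref{HonestDefPred} to get an honest definition $\theta(x,d)$ of the quantified formula itself, and the equality $\theta(A,d)=\phi(A,c)$ transplants the whole alternation pattern into $A_{ind(L_{\pred})}$ (with $d$ a $\pred$-point of an elementary extension of the pair, which is harmless since $\pred(\mathbb M)_{ind(L_{\pred})}$ is an elementary extension of $A_{ind(L_{\pred})}$, or by pulling $d$ back into $A$ for each finite alternation length). This bypasses Lemma \ref{mainLemma} entirely for this step and is a real simplification; note that it is specific to the ``over $\pred$'' situation and could not replace Lemma \ref{mainLemma} in the proof of Theorem \ref{depPair1} itself, where the indiscernible sequence lives in $M$ and an honest definition over $A$ says nothing about its trace there --- which is presumably why you still, correctly, route the final step through Theorem \ref{depPair1}.
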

\begin{proof}
Since $A_{ind(L^{bdd}_{\pred})}$ is interpretable in $A_{ind(L)}$ the hypothesis implies that $A_{ind(L^{bdd}_{\pred})}$ is $NIP$. Thus, if $\bar{a}=(a_i)_{i<n}$ is a sequence inside $\pred$ then any $\Delta(\bar{x},\bar{a})$ has an honest definition over $A$ (although we don't yet know that $\Delta(\bar{x},\bar{y})$ is $NIP$ over $\pred$, we do know that $\Delta(\bar{x},\bar{a})$ is $NIP$ over $\pred$, so Proposition \ref{HonestLemma} applies). We can then use the same proof as in \ref{depPair1} to ensure that $T_{\pred}$ is $NIP$ over $\pred$, and finally apply Theorem \ref{depPair1} to conclude.
 \end{proof}

\begin{cor}\label{depPair3}
Assume $T$ is $NIP$, and let $(M,N)$ be a pair of models of $T$ ($N\prec M$). Assume that $T_\pred$ is bounded, then $T_\pred$ is $NIP$.
\end{cor}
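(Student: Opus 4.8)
The plan is to reduce Corollary \ref{depPair3} to Corollary \ref{depPair2}, which is already available. Both statements share the hypotheses that $T$ is $NIP$ and that $T_\pred$ is bounded; the only additional assumption in \ref{depPair2} is that the induced structure $N_{ind(L)}$ (with $N$ now playing the role of the named set $A$) is $NIP$. So the entire problem comes down to verifying this one condition in the special case where the named set is an elementary submodel $N\prec M$.

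To check that $N_{ind(L)}$ is $NIP$, I would first spell out its relations. By definition it carries a predicate for each trace $N^n\cap\phi(\bar x)$ of an $L$-formula $\phi$ with parameters ranging in the ambient monster. Unwinding the definition of externally definable set given just before Corollary \ref{weakstable}, these are exactly the externally definable subsets of $N^k$: sets of the form $\phi(N^k,d)$ with $\phi(x,y)\in L$ and $d$ in an elementary extension of $N$. Hence $N_{ind(L)}$ is a reduct of the Shelah expansion $N^{Sh}$.

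The key step, and the one place where the hypothesis that $(M,N)$ is a pair of \emph{models} is used, is the observation that $N$ itself is a model of $T$. This is precisely what lets me invoke Shelah's expansion theorem: since $T$ is $NIP$ and $N\models T$, Corollary \ref{ShelahExp} gives that $N^{Sh}$ is $NIP$, and therefore its reduct $N_{ind(L)}$ is $NIP$ as well. With this in place, all the hypotheses of Corollary \ref{depPair2} are satisfied, and applying it yields that $T_\pred$ is $NIP$.

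I do not expect a genuine obstacle: the real work has already been absorbed into Shelah's expansion theorem and into Corollary \ref{depPair2}. The only point demanding a little care is the identification of the relations of $N_{ind(L)}$ with the externally definable subsets of $N$, together with the remark that it is exactly the model-hood of $N$ (and not merely its being a small set) that makes Shelah's theorem applicable; for an arbitrary $A$ the induced structure can fail to be $NIP$, which is why \ref{depPair2} retains it as a hypothesis.
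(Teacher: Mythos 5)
Your proposal is correct and follows exactly the paper's route: the paper's proof of Corollary \ref{depPair3} is the one-line observation that $N_{ind(L)}$ is dependent, so Corollary \ref{depPair2} applies. You have merely made explicit the justification the paper leaves implicit, namely that the relations of $N_{ind(L)}$ are externally definable subsets of the model $N$, so $N_{ind(L)}$ is a reduct of $N^{Sh}$ and is $NIP$ by Shelah's expansion theorem (Corollary \ref{ShelahExp}).
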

\begin{proof}
$N_{ind(L)}$ is dependent, and so the hypotheses of Corollary \ref{depPair2} are satisfied.
\end{proof}

Note that the boundedness assumption cannot be dropped, because for example a pair of real closed fields can have $IP$, and also there is a stable theory such that some pair of its models has $IP$ (\cite{Poizat2}).

\section{Applications}

In this section we give some applications of the criteria for the dependence of the pair.

\subsection{Naming an indiscernible sequence}

In \cite{BB} Baldwin and Benedikt prove the following.

\begin{fait}\label{BBResults}($T$ is $NIP$) Let $I \subset M$ be an indiscernible sequence indexed by a dense complete linear order, small in 
$M$ (that is every $p\in S_{< \omega}(I)$ is realised in $M$). Then

1) $Th(M,I)$ is bounded (\cite[Theorem 3.3]{BB}),

2) $(M,I)\equiv(N,J)$ if and only if $EM(I)=EM(J)$ (\cite[Theorem 8.1]{BB}),

3) The $L_{\mathbf{P}}$-induced structure on $\mathbf{P}$ is just the equality (if $I$ is totally transcendental) or the linear order otherwise (\cite[Corollary 3.6]{BB}).
\end{fait}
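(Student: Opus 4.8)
The engine of the whole Fact is the standard $NIP$ ``finitely many cuts'' phenomenon: if $T$ is $NIP$, $I=(e_t)_{t\in\mathcal O}$ is an indiscernible sequence and $\bar a$ is a finite tuple, then there is a finite set $C_{\bar a}$ of cuts of $\mathcal O$ such that on each open interval determined by $C_{\bar a}$ the sequence $I$ stays indiscernible over $\bar a$; moreover $|C_{\bar a}|$ is bounded in terms of the alternation numbers of the $L$-formulas involved. The plan is to use this to attach to each $\bar a$ a \emph{bounded invariant}: the finite list of cuts in $C_{\bar a}$ together with the $L$-type of $\bar a$ over a finite set of elements of $I$ flanking those cuts. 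Because $\mathcal O$ is dense and complete, each such cut is either filled by an element of $I$ or is a genuine Dedekind cut with no nearest points, and the smallness hypothesis (every finitary type over $I$ is realised in $M$) guarantees that the flanking configurations we need are already realised inside $M$. The first claim, to be proved, is that this bounded invariant determines $\tp_L(\bar a/I)$.

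For (1) I would run a quantifier-reduction induction showing every $L_{\pred}$-formula is equivalent modulo $Th(M,I)$ to a bounded one. Boolean combinations are free, so the only step is to push an unbounded $\exists x$ inside. Given $\exists x\,\psi(x,\bar a)$ with $\bar a\in I$, the previous paragraph says the $L$-type over $I$ of a potential witness $x$ is coded by its (boundedly many) cuts in $I$ and its type over finitely many elements of $I$; one then rewrites ``there is a witness'' as a bounded existential statement asserting the existence of suitable flanking elements \emph{inside} $\pred$ realising the required finite type, using density and completeness to convert the cut condition into a condition on actual elements of $I$, and using smallness to know that such a witness exists in $M$ exactly when the bounded condition holds. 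This is the step I expect to be the main obstacle: the bookkeeping needed to turn ``$x$ sits in a prescribed Dedekind cut of $I$ and has a prescribed finite type'' into an honest bounded formula — and in particular the careful handling of empty cuts, where no element of $I$ flanks $x$ on one side — is exactly where density, completeness and smallness must be used simultaneously.

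Granting (1), part (2) is almost formal. Elementary equivalence of the pairs reduces to agreement on bounded \emph{sentences}, i.e.\ sentences $Q_0y_0\in\pred\cdots Q_ny_n\in\pred\,\phi(\bar y)$ with $\phi\in L$. By indiscernibility together with density and completeness of the index order, every order-and-equality pattern of tuples from $\pred$ is populated, so the truth value of such a sentence in $(M,I)$ is computed purely from the family of $L$-types realised by increasing tuples of $I$, which is exactly $EM(I)$. Hence $EM(I)=EM(J)$ forces the two pairs to agree on all bounded sentences, and the converse is immediate since $EM(I)$ is itself read off from bounded sentences over $\pred$.

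For (3), the $L_{\pred}$-induced structure on $\pred$ is generated by the traces $\phi(I^n)$ for $\phi\in L$ (after the bounded reduction of (1), whose quantifiers still range over $I$). By indiscernibility each such trace is constant on tuples of a fixed order-and-equality type, hence is a Boolean combination of the order $<$ and equality restricted to $I$; thus nothing beyond $<$ and $=$ is induced. If $I$ is totally indiscernible (an indiscernible set) then increasing and decreasing tuples have the same $L$-type, the order cannot be detected, and only equality survives; otherwise some $L$-formula separates an increasing pair from a decreasing one, so the order is induced and it generates the rest. This completes the plan, with the boundedness step of (1) being the crux on which (2) and (3) rest.
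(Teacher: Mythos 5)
The paper does not actually prove this Fact: all three parts are imported from Baldwin--Benedikt \cite{BB}, and the only argument the authors supply is the paragraph that follows the statement, which patches a mismatch between \cite{BB} and the form quoted here. Namely, the bounded formulas of \cite[Theorem 3.3]{BB} mention the order on $I$ as an extra symbol, and one has to check that this order is already $L$-definable from finitely many parameters of $M$ (using smallness and the fact that transpositions generate the symmetric group) when $I$ is not totally indiscernible, and that the stable counterpart with equality alone applies when it is. Your proposal instead sets out to reprove the Baldwin--Benedikt theorems from scratch, and it never touches this order-predicate issue, which is the one piece of argument the paper itself contains.

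As a reconstruction of \cite{BB}, your sketch identifies the right engine (shrinking of indiscernibles and finite alternation, with density, completeness and smallness converting cut data into bounded conditions), but it is incomplete exactly where you say it is: the construction of the bounded formula in (1) is never carried out, and (2) and (3) rest entirely on it. Two concrete problems. First, the claim that a \emph{finite} set of cuts together with the type over finitely many flanking elements ``determines $\tp_L(\bar a/I)$'' is false in a general $NIP$ theory: as the paper itself notes in Section 3.1, controlling the full type requires up to $|T|$ many critical points, and only the $\phi$-type for a fixed formula $\phi$ is governed by finitely many cuts. The argument must therefore be localized to the finitely many $L$-formulas occurring in the formula being bounded; as written, the ``bounded invariant'' does not exist. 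Second, the induction step is not ``push $\exists x$ inside $\psi(x,\bar a)$ with $\bar a\in I$'': one must commute an unbounded $\exists x$, ranging over all of $M$, past $\pred$-quantifiers in a formula whose remaining free variables also range over $M$, and the bookkeeping there (including the treatment of unfilled cuts) is precisely the content of \cite[Theorem 3.3]{BB}. So the proposal is a plausible plan for reproving the cited results, but it is not a proof, and it omits the only step the paper actually argues.
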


It is not stated in the paper in exactly this form because the bounded formula from \cite[Theorem 3.3]{BB} involves the order on the indiscernible sequence. However, it is not a problem. If the sequence $I=(a_i)$ is not totally indiscernible, then the order is $L$-definable (maybe after naming finitely many constants). Namely, we will have  $\phi(a_0,...,a_k,a_{k+1},...,a_n) \land \neg \phi(a_0,...,a_{k+1},a_k,...,a_n)$ for some $k<n$ and $\phi \in L$ (as the permutation group is generated by transpositions). But then the order on $I$ is given by $y_1<y_2 \leftrightarrow \phi(a'_0...a'_{k-1},y_1,y_2,a'_{k+2},...,a'_n)$, for any $a'_0...a_{k-1}Ia'_{k+2}...a'_n$ indiscernible (and we can find such $a'_0...a_{k-1}a'_{k+2}...a'_n$ in $M$ by the smallness assumption).
If $I$ is an indiscernible set, then the stable counterpart of their theorem \cite[3.3]{BB} applies giving a bounded formula using just the equality (as the proof in \cite[Section 4]{BB} only uses that for an $NIP$ formula $\phi(x,y)$ and an arbitrary $c$,  $\{a_i : \phi(a_i,c)\}$ is either finite or cofinite, with size bounded by $alt(\phi)$).\\

The following answers Conjecture 9.1 from that paper. 

\begin{prop}\label{BB}
Let $(M,I)$ be a pair as described above, obtained by naming a small, dense, complete indiscernible sequence. Then $T_\pred$ is $NIP$.
\end{prop}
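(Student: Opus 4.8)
The plan is to apply Corollary \ref{depPair2} with the named set $A$ taken to be $I$. That corollary has three hypotheses: that $T$ is $NIP$, that $A_{ind(L)}$ is $NIP$, and that $T_\pred$ is bounded. The first is assumed throughout this section, and the third is exactly part (1) of Fact \ref{BBResults}. So the only genuine work is to verify that the induced structure $I_{ind(L)}$ is $NIP$.

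For this I would read off the induced structure from part (3) of Fact \ref{BBResults}. Since $T_\pred$ is bounded, every $L_\pred$-formula is equivalent to a bounded one, so the full $L_\pred$-induced structure on $\pred$ coincides with $I_{ind(L^{bdd}_\pred)}$. By Fact \ref{BBResults}(3) this structure is interdefinable with the pure equality structure (when $I$ is totally indiscernible) or with a dense linear order (otherwise, possibly after naming finitely many constants, as explained in the discussion following the fact). Both of these are $NIP$: pure equality is stable, and any linear order is $NIP$ (indeed dp-minimal), and naming finitely many constants preserves $NIP$. Now invoking the bi-interpretability of $I_{ind(L)}$ with $I_{ind(L^{bdd}_\pred)}$ noted in the setting paragraph, together with the fact that $NIP$ is preserved under interpretation, we conclude that $I_{ind(L)}$ is $NIP$.

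Having checked all three hypotheses, Corollary \ref{depPair2} immediately yields that $T_\pred$ is $NIP$, which is the desired conclusion.

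The step I expect to require the most care is the passage, via boundedness and bi-interpretation, from the $L_\pred$-induced structure described in Fact \ref{BBResults}(3) to the $L$-induced reduct $I_{ind(L)}$ whose $NIP$ness is what Corollary \ref{depPair2} literally demands. This is where the hypotheses that $I$ be small, dense, and complete are actually used (through Fact \ref{BBResults}), and where one must make sure that the identification of the induced structure with a pure set or a linear order is exact rather than merely up to the named constants; since naming constants is harmless for $NIP$, this presents no real obstacle, but it is the point at which the argument genuinely leans on the structural input about indiscernible sequences.
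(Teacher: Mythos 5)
Your proposal is correct and follows exactly the paper's own (one-line) proof: apply Corollary \ref{depPair2}, using Fact \ref{BBResults}(1) for boundedness and Fact \ref{BBResults}(3) for $NIP$ness of the induced structure on $I$. Your extra care about passing from the $L_\pred$-induced structure to $I_{ind(L)}$ is sound, though one can shortcut it by noting that $I_{ind(L)}$ is simply a reduct of the full $L_\pred$-induced structure described in Fact \ref{BBResults}(3).
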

\begin{proof}
By 1) and 3) above, all the assumptions of Corollary \ref{depPair2} are satisfied.
\end{proof}

It also follows that every unstable dependent theory has a dependent expansion with a definable linear order.

Recall the following definition (one of the many equivalent) from \cite{Sh863}.

\begin{defi}\label{strongplus} \cite[Observations 2.1 and 2.10]{Sh863} $T$ is strongly (resp. strongly$^+$) dependent if for any infinite indiscernible sequence $(\bar{a}_{i})_{i \in I}$ with $\bar{a}_i \in \mathbb{M}^{\omega}$, $I$ a complete linear order, and finite tuple $c$ there is a finite $u \subset I$ such that for any two $i_1<i_2 \in u, (i_1,i_2) \cap u = \emptyset$ the sequence $(\bar{a}_i)_{i \in (i_1,i_2)}$ is indiscernible over $c$ (resp. ${c} \cup (\bar{a}_i)_{i \in (- \infty, i_1] \cup [i_2,\infty)}$).

$T$ is $dp$-minimal (resp. $dp^+$-minimal) when for a singleton $c$ there is such a $u$ of size 1.
\end{defi}

For a general $NIP$ theory, the property described in the definition holds, but with $u\subset I$ of size $|T|$, instead of finite. We can take $u$ to be the set of \emph{critical points} of $I$ defined by: $i\in I$ is critical for a formula $\phi(x;y_1,...,y_n,c) \in L$ if there are $j_1,...,j_n \neq i$ such that $\phi(a_i;a_{j_1},...,a_{j_n},c)$ holds, but in every open interval of $I$ containing $i$, we can find some $i'$ such that $\neg \phi(a_{i'};a_{j_1},...,a_{j_n},c)$ holds. One can show (see \cite[Section 3]{Adl}) that given such a formula $\phi(x;y_1,..,y_n,c)$, the set of critical points for $\phi$ is finite. Also $T$ is strongly$^+$ dependent if and only if for every finite set $c$ of parameters, the total number of critical points for formulas in $L(c)$ is finite.
\\

Unsurprisingly $dp$-minimality is not preserved in general after naming an indiscernible sequence. By \cite[Lemma 3.3]{Good} in an ordered $dp$-minimal group, there is no infinite definable nowhere-dense subset, but of course every small indiscernible sequence is like this.

There are strongly dependent theories which are not strongly$^+$ dependent, for example $p$-adics (\cite{Sh863}). In such a theory, strong dependence is not preserved by naming an indiscernible sequence.

\begin{prop}
\label{StrDepNotPreserved} Let $T$ be not strongly $^{+}$ dependent, witnessed by a dense complete indiscernible sequence $(\bar a_i)_{i\in I}$ of finite tuples. Let $\mathbf{P}$ name that sequence in a big saturated model. Then $T_{\mathbf{P}}$ is not strongly dependent.
\end{prop}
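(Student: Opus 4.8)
The plan is to re-use the given sequence $(\bar a_i)_{i\in I}$, now viewed inside the pair, as a witness to the failure of strong dependence of $T_{\pred}$, keeping the \emph{same} finite parameter $c$ for which strongly$^+$ dependence fails. First I would unwind the hypothesis through the critical-point characterisation recalled before Definition \ref{strongplus}: since strongly$^+$ dependence is equivalent to the finiteness, for every finite $c$, of the total number of critical points, the assumption gives a finite tuple $c$ for which $(\bar a_i)$ has infinitely many critical points $i_k$ ($k<\omega$), each critical for some formula $\phi_k(x;\bar y,c)\in L$ with witnesses $\bar a_{\bar r_k}$ taken from the sequence. By Fact \ref{BBResults}(3) together with Lemma \ref{IndSeq} the sequence stays $L_{\pred}$-indiscernible in the pair, and by Proposition \ref{BB} we already know $T_{\pred}$ is $NIP$; so it only remains to show that no finite $u\subset I$ cuts $I$ into gaps all of which are $L_{\pred}(c)$-indiscernible.

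Suppose toward a contradiction that some finite $u$ does. Then one gap $G=(i_1,i_2)$ contains infinitely many of the $i_k$, while $(\bar a_i)_{i\in G}$ is $L_{\pred}(c)$-indiscernible. Here I would first dispose of the easy case: if for some critical $i_k\in G$ \emph{all} the witnesses $\bar a_{\bar r_k}$ also lie in $G$, then reading $\phi_k(x;\bar z,c)$ as an $L(c)\subseteq L_{\pred}(c)$-formula and choosing $i'\in G$ close enough to $i_k$ to lie on the same side of the finitely many $\bar r_k$, the index-tuples $(i_k,\bar r_k)$ and $(i',\bar r_k)$ are order-isomorphic inside $G$, so $L_{\pred}(c)$-indiscernibility forces $\phi_k(a_{i_k};\bar a_{\bar r_k},c)\leftrightarrow\phi_k(a_{i'};\bar a_{\bar r_k},c)$, directly contradicting criticality. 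Hence I may assume each of the infinitely many critical points in $G$ uses at least one witness lying \emph{outside} $G$ — exactly the parameters that strongly$^+$, but not strongly, dependence is allowed to exploit. This is the whole point: naming the sequence turns those external witnesses into named points of $\pred$.

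The heart of the argument is to internalise such an external witness by the predicate, i.e.\ to manufacture a one-variable $L_{\pred}(c)$-formula $\chi(x)$ with $\chi(a_{i_k})$ true and $\chi(a_{i'})$ false for $i'\in G$ arbitrarily close to $i_k$, which already contradicts the $L_{\pred}(c)$-indiscernibility of $G$ (single points of $G$ being trivially order-isomorphic). The naive choice, $\exists\bar z\in\pred\,(\rho_k(\bar z,x)\wedge\phi_k(x;\bar z,c))$ with $\rho_k$ fixing the order-type of the witnesses relative to $x$, is where the main obstacle sits: the existential quantifier over $\pred$ may be satisfied at the nearby $a_{i'}$ by \emph{spurious} witnesses $\bar z\neq\bar a_{\bar r_k}$, so the formula fails to register the change at $i_k$. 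To defeat these spurious witnesses I would follow the infimum technique of the o-minimal example above (Claims 1 and 2), exploiting that $I$ is dense and complete: instead of asserting bare existence, I would describe the relevant boundary cut of the trace $\{z\in\pred:\phi_k(x;z,c)\}$ by formulas such as $\forall z\in\pred\,(x<z\le w\rightarrow\neg\phi_k(x;z,c))$ and track how this cut moves as $x$ crosses $i_k$, reducing the witness to a sup/inf that the induced order on $\pred$ can pin down. Making this genuinely detect the jump — rather than be swamped by alternative witnesses — is the crux of the proof; once a single critical point in $G$ yields such a $\chi$, the contradiction with the indiscernibility of $G$ is immediate, and $T_{\pred}$ is not strongly dependent.
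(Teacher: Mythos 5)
Your setup is right and matches the paper's: keep the same sequence and the same $c$, use the critical-point characterisation of strong$^{+}$ dependence, note that the sequence remains $L_{\pred}$-indiscernible in the pair, and derive a contradiction with the $L_{\pred}(c)$-indiscernibility of some gap of any candidate finite $u$. But the proof is not complete: you explicitly leave the crux open (``making this genuinely detect the jump \dots is the crux of the proof''), and the sup/inf device you sketch for defeating spurious witnesses is both undeveloped and doubtful --- the witnesses are \emph{tuples} of sequence elements (each themselves a finite tuple), so there is no obvious ``boundary cut'' of $\{\bar z\in\pred:\phi_k(x;\bar z,c)\}$ for the induced linear order to pin down, and nothing in the hypotheses guarantees such a cut would move monotonically as $x$ crosses $i_k$.

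The missing idea, which is how the paper closes the argument, is that you should not try to detect the jump at one critical point with its \emph{specific} witnesses. Instead, observe that for each formula $\phi(x;y_1,\dots,y_n,c)$ the \emph{entire set of its critical points} is $L_{\pred}$-definable over $c$ (plus the finitely many parameters defining the order on $I$): criticality is by definition an existential statement over witness tuples from the sequence, followed by a universal/existential statement about neighbourhoods, all of which quantify only over $\pred$. Your ``spurious witnesses'' are then harmless: if a nearby $a_{i'}$ satisfies the existential via some other witness tuple, that just means $i'$ is itself a critical point of $\phi$, and by the cited fact there are only finitely many of those. So each $\chi_{\phi_k}$ defines a \emph{finite} subset of the sequence; since the union of these finite sets is infinite (this is exactly the failure of strong$^{+}$ dependence), any finite $u$ leaves some critical point of some $\phi_k$ inside an infinite gap $G$, and $\chi_{\phi_k}$ holds at that point but fails at all but finitely many points of $G$ --- contradicting indiscernibility of $G$ over $c$ and the extra parameters, since single indices in $G$ all have the same order type. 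This replaces the hard local analysis you were attempting with a finiteness-plus-definability argument, and also makes your separate ``easy case'' (all witnesses inside $G$) unnecessary.
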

\begin{proof}
So let $(\bar a_i)_{i \in I}, c$ witness failure of strong$^+$ dependence. By dependence of $T$,  let $u\subset I$ be chosen as above. Notice that for every $\phi(x;y_1,...,y_n,c)$ , the finite set of its critical points in $I$ is $L_\pred$-definable over $c$ (and possibly finitely many parameters, using order on $I$ in the non-totally indiscernible case, and just the equality otherwise). As in our situation $u$ is infinite, we get infinitely many different finite subsets of $(\bar a_i)_{i\in I}$ definable over $c$, in $T_{\mathbf{P}}$. As $(\bar a_i)_{i\in I}$ is still indiscernible in $T_\pred$ by Fact \ref{BBResults}, 3), this contradicts strong dependence.
\end{proof}

\begin{question}
Is strong$^+$ dependence preserved by naming an indiscernible sequence ?
\end{question}

\subsection{Dense pairs and related structures}

Van den Dries proves in \cite{vdd} that in a dense pair of o-minimal structures, formulas are bounded. This is generalised in \cite{Ber} to lovely pairs of geometric theories of \textthornvari-rank 1. From Theorem \ref{depPair3}, we conclude that such pairs are dependent.

This was already proved by Berenstein, Dolich and Onshuus in \cite{BDO} and generalised by Boxall in \cite{Box}. Our result generalises \cite[Theorem 2.7]{BDO}, since the hypothesis there ($\acl$ is a pregeometry and $A$ is ``innocuous")  imply boundedness of $T_\pred$.  To see this take any two tuples $a$ and $b$ and assume that they have the same bounded types. Let $a' \in \pred$ be such that $aa'$ is a $\pred$-independent tuple. Then by hypothesis, we can find $b'$ such that $tp_{L_{\pred}^{bdd}}(bb')=tp_{L_{\pred}^{bdd}}(aa')$. Now the fact that $aa'$ is $\pred$-independent can be expressed by bounded formulas. In particular $bb'$ is also $\pred$-independent. So by innocuous, $tp_{L_\pred}(aa')=tp_{L_\pred}(bb')$ and we are done.

It is not clear to us if Boxall's hypothesis imply that formulas are bounded. (However, note that in the same paper Boxall applies his theorem to the structure of $\mathbb{R}$ with a named subgroup studied by Belegradek and Zilber, where we know that formulas are bounded.)

The paper \cite{BDO} gives other examples of theories of pairs for which formulas are bounded, including dense pairs of $p$-adic fields and weakly o-minimal theories, recast in the more general setting of \emph{geometric topological structures}.
\\

Similar theorems are proved by G\"unaydin and Hieronymi in \cite{GH1}. Their Theorem 1.3 assumes that formulas are bounded along with other hypothesis, so is included in Theorem \ref{depPair3}. They apply it to show that pairs of the form $(\mathbb R,\Gamma)$ are dependent, where $\Gamma\subset \mathbb R^{>0}$ is a dense subgroup with the \emph{Mann property}. We refer the reader to \cite{GH1} for more details.

In this same paper the authors also consider the case of tame pairs of o-minimal structures. This notion is defined and studied in \cite{DL}. Let $T$ be an o-minimal theory. A pair $(N,M)$ of models of $T$ is \emph{tame} if $M \prec N$ and for every $a\in N$ which is in the convex hull of $M$, there is $\st(a)\in M$ such that $|a-\st(a)|<b$ for every $b\in M^{>0}$. It is proved in \cite{DL} that formulas are bounded is such a pair, so again it follows from Theorem \ref{depPair3} that $T_\pred$ is dependent. Note that G\"unaydin and Hieronymi prove this using their Theorem 1.4 involving quantifier elimination in a language with a new function symbol. This theorem does not seem to factorise trivially through \ref{depPair2}. They also prove in that same paper that the pair $(\mathbb R,2^{\mathbb Z})$ is dependent.

Let $\textsl{C}$ be an elliptic curve over the reals, defined by  $y^2 = x^3 + ax + b$ with $a,b \in \mathbb{Q}$, and let $\mathbf{P} \subseteq \mathbb{Q}^2$ name the set of its rational points. This theory is studied in \cite{GH2}, where it is proved in particular that

\begin{fait}
1) $Th(\mathbb{R},\textsl{C}(\mathbb{Q}))$ is bounded (follows from \cite[Theorem 1.1]{GH2})

2) $A_{ind(L_{\pred})}$ is $NIP$ (follows from \cite[Proposition 3.10]{GH2})
\end{fait}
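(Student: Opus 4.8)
The final statement is a two-part Fact asserting that for an elliptic curve $\textsl{C}$ over $\mathbb{R}$ with equation $y^2 = x^3 + ax + b$ ($a,b \in \mathbb{Q}$), and with $\pred$ naming the rational points $\textsl{C}(\mathbb{Q}) \subseteq \mathbb{Q}^2$: (1) the pair theory $Th(\mathbb{R},\textsl{C}(\mathbb{Q}))$ is bounded, and (2) the induced structure $A_{ind(L_\pred)}$ on the predicate is $NIP$. Since the statement is explicitly labelled as following from results of \cite{GH2}, my approach would not be to reprove these facts from scratch but to explain how each part is extracted from the cited theorems, and then observe that together they feed directly into the machinery of Section 2. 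The real content I would aim to display is that these two facts are exactly what Corollary \ref{depPair2} consumes.

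\textbf{Part (1): boundedness.} First I would invoke \cite[Theorem 1.1]{GH2}, which should give a quantifier-elimination or model-completeness statement for the pair $(\mathbb{R},\textsl{C}(\mathbb{Q}))$ relative to bounded formulas. The key step is to verify that the quantifier elimination produced there is down to $L_\pred^{bdd}$, i.e.\ that every $L_\pred$-formula is equivalent modulo $Th(\mathbb{R},\textsl{C}(\mathbb{Q}))$ to one of the form $Q_0 y_0 \in \pred \cdots Q_n y_n \in \pred\, \phi(x,\bar y)$ with $\phi \in L$. Concretely this means checking that any unbounded quantifier over $\mathbb{R}$ appearing in a formula can be eliminated using the o-minimality of the base $\mathbb{R}$ together with the relations defining $\textsl{C}(\mathbb{Q})$, leaving only quantifiers restricted to the named group of rational points. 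This is a matter of reading off the precise form of the QE in \cite{GH2}; the group structure on $\textsl{C}(\mathbb{Q})$ (finitely generated, by Mordell--Weil) is what makes the induced structure tractable enough for such an elimination.

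\textbf{Part (2): $NIP$ of the induced structure.} Here I would extract from \cite[Proposition 3.10]{GH2} a description of $A_{ind(L_\pred)}$, i.e.\ of all subsets of powers of $\textsl{C}(\mathbb{Q})$ cut out by $L_\pred$-formulas. The expected outcome is that this induced structure is (bi-interpretable with) the group $\textsl{C}(\mathbb{Q})$ with possibly some extra definable structure coming from the ambient o-minimal $\mathbb{R}$, and that this structure is a known $NIP$ (indeed likely stable or even one-based) structure on a finitely generated abelian group. The step here is to match the conclusion of \cite[Proposition 3.10]{GH2} to the definition of $A_{ind(L_\pred)}$ given earlier in the excerpt and confirm the $NIP$ conclusion transfers.

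\textbf{Payoff and main obstacle.} With both facts in hand, the dependence of $T_\pred$ would follow immediately: part (1) gives that $T_\pred$ is bounded, part (2) gives that $A_{ind(L_\pred)}$ is $NIP$, and since $A_{ind(L)}$ is interpretable in $A_{ind(L_\pred)}$ it is $NIP$ as well, so by Corollary \ref{depPair2} the pair is $NIP$. The main obstacle is not in this final assembly, which is formal, but in correctly reading the two cited results at the right level of precision: specifically, ensuring that the quantifier elimination of \cite[Theorem 1.1]{GH2} really lands inside $L_\pred^{bdd}$ (and not merely in some richer language with extra function symbols, as happens for the tame-pairs case discussed earlier in the excerpt), and that the induced-structure analysis of \cite[Proposition 3.10]{GH2} genuinely matches the $L_\pred$-induced structure rather than only the $L$-induced one. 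Once those two identifications are secured, the $NIP$ conclusion for the pair is immediate from the Section 2 criteria.
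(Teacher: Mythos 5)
The paper offers no proof of this Fact beyond the citations to \cite{GH2}, and your proposal does essentially the same thing: it reads off part (1) from \cite[Theorem 1.1]{GH2}, part (2) from \cite[Proposition 3.10]{GH2}, and assembles the dependence of the pair via Corollary \ref{depPair2}, exactly as the paper does in the sentence following the Fact. Your cautionary remarks about verifying that the quantifier elimination lands in $L_\pred^{bdd}$ and that the induced structure matched is the $L_\pred$-induced one are sensible but do not change the route.
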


Applying Corollary \ref{depPair2} we conclude that the pair is dependent.

\bibliography{everything}

\begin{thebibliography}{vdDL95}

\bibitem[Adl08]{Adl}
Hans Adler.
\newblock An introduction to theories without the independence property.
\newblock {\em Preprint}, 2008.

\bibitem[BB00]{BB}
John Baldwin and Michael Benedikt.
\newblock Stability theory, permutations of indiscernibles, and embedded finite
  models.
\newblock {\em Transactions of the American Mathematical Society},
  352(11):4937--4969, 11 2000.

\bibitem[BB04]{BoBa}
Bektur Baizhanov and John Baldwin.
\newblock Local homogeneity.
\newblock {\em Journal of Symbolic Logic}, 69(4):1243--1260, 12 2004.

\bibitem[BDO08]{BDO}
Alexander Berenstein, Alf Dolich, and Alf Onshuus.
\newblock The independence property in generalized dense pairs of structures.
\newblock {\em preprint}, 2008.

\bibitem[Ber]{Ber}
Alexander Berenstein.
\newblock Lovely pairs and dense pairs of o-minimal structures.
\newblock {\em submitted}.

\bibitem[Box09]{Box}
Gareth Boxall.
\newblock {NIP} for some pair-like theories.
\newblock {\em preprint}, 2009.

\bibitem[BP98]{Poizat}
Yerzhan Baisalov and Bruno Poizat.
\newblock Paires de structures o-minimales.
\newblock {\em Journal of Symbolic Logic}, 63:570--578, 1998.

\bibitem[CZ01]{CaZi}
Enrique Casanovas and Martin Ziegler.
\newblock Stable theories with a new preicate.
\newblock {\em Journal of symbolic logic}, 66(3):1127--1140, 09 2001.

\bibitem[EP05]{Prestel}
Antonio~J. Engler and Alexander Prestel.
\newblock {\em Valued fields}.
\newblock Springer Monographs in Mathematics. Springer-Verlag, Berlin, 2005.

\bibitem[GH09]{GH2}
Ayhan G\"unaydin and Philipp Hieronymi.
\newblock The real field with the rational points of an elliptic curve.
\newblock {\em arXiv:0906.0528}, 2009.

\bibitem[GH10]{GH1}
Ayhan G\"unaydin and Philipp Hieronymi.
\newblock Dependent pairs.
\newblock {\em MODNET preprint 146}, 2010.

\bibitem[Goo09]{Good}
John Goodrick.
\newblock A monotonicity theorem for dp-minimal densely ordered groups.
\newblock {\em Journal of Symbolic Logic, accepted}, 2009.

\bibitem[Gui09]{Guingona}
Vincent Guingona.
\newblock Dependence and isolated extensions.
\newblock {\em preprint}, 2009.

\bibitem[Hod93]{Hod}
Wilfrid Hodges.
\newblock {\em Model Theory}, volume~42 of {\em Encyclopedia of mathematics and
  its applications}.
\newblock Cambridge University Press, Great Britain, 1993.

\bibitem[OP07]{OnPe}
Alf Onshuus and Ya'acov Peterzil.
\newblock A note on stable sets, groups, and theories with nip.
\newblock {\em Mathematical Logic Quarterly}, 53:295--300, 2007.

\bibitem[Pil07]{Pillayexp}
Anand Pillay.
\newblock On externally definable sets and a theorem of {S}helah.
\newblock {\em Felgner Festchrift,Studies in Logic, College Publications},
  2007.

\bibitem[Poi83]{Poizat2}
Bruno Poizat.
\newblock Paires de structures stables.
\newblock {\em The Journal of Symbolic Logic}, 48:239--249, (1983.

\bibitem[She04]{Sh783}
Saharon Shelah.
\newblock Dependent first order theories, continued.
\newblock {\em arXiv:math/0406440v1}, 2004.

\bibitem[She05]{Sh863}
Saharon Shelah.
\newblock Strongly dependent theories.
\newblock {\em arXiv:math.LO/0504197}, 2005.

\bibitem[vdD98]{vdd}
Lou van~den Dries.
\newblock Dense pairs of o-minimal structures.
\newblock {\em Fund. Math.}, 157:61--78, 1998.

\bibitem[vdDL95]{DL}
Lou van~den Dries and Adam~H. Lewenberg.
\newblock T-convexity and tame extensions.
\newblock {\em Journal of Symbolic Logic}, 155(3):807--836, 1995.

\end{thebibliography}

\end{document}